\documentclass[12 pt,a4paper]{amsart} 
\usepackage{amsfonts,amssymb,amscd,amsmath,enumerate,verbatim,calc} 
\usepackage{float}
\usepackage{amsthm}
\newtheorem{theorem}{Theorem}[section]
\newtheorem{definition}{Definition}[section]

\newtheorem{lemma}[theorem]{Lemma}
\newtheorem{corollary}[theorem]{Corollary}
\newtheorem{proposition}[theorem]{Proposition}
\newtheorem{remark}[theorem]{Remark}
\usepackage{tikz-cd}
\usepackage{mathtools}
\usepackage{xfrac}
\usepackage[all,cmtip]{xy}
\newtheorem{example}[theorem]{Example}
\numberwithin{equation}{section}

\usepackage{amsfonts}

\newcommand{\im}{\rm Im}

\renewcommand{\ker}{\textnormal{ker}}
\renewcommand{\im}{\textnormal{Im}}

\textwidth = 15.5 cm 
\textheight = 26 cm 
\topmargin = -1cm 
\oddsidemargin = 1 cm 
\evensidemargin = 1 cm 
\pagestyle{plain} 

\begin{document}
\title{Tensor square and isoclinic extensions of multiplicative Lie algebras }
\author{Dev Karan Singh $^{1}$, Amit Kumar$^{2}$, Sumit Kumar Upadhyay$^{3}$ AND Shiv Datt Kumar$^{4}$\vspace{.4cm}\\
{$^{1, 4}$Department of Mathematics, \\ Motilal Nehru National Institute of Technology Allahabad \\ Prayagraj (UP) - 211004, India}\\
		{$^{2, 3}$Department of Applied Sciences,\\ Indian Institute of Information Technology Allahabad\\Prayagraj, U. P., India} }
	
	\thanks{$^1$devkaransingh1811@gmail.com, $^2$amitiiit007@gmail.com, $^3$upadhyaysumit365@gmail.com, $^4$sdt@mnnit.ac.in.  }
	
	\thanks {2020 Mathematics Subject classification: 19C09, 20F40}
\keywords{Multiplicative Lie algebra, Tensor product, Central extensions, Isoclinism, Cover}

\begin{abstract}
 In this paper, we discuss the capable and isoclinic properties of the tensor square in the context of multiplicative Lie algebras. We also developed the concept of isoclinic extensions and proved several results for multiplicative Lie algebras. Consequently, we demonstrate that covers of a multiplicative Lie algebra are mutually isoclinic.
\end{abstract}
	\maketitle
	
\section{Introduction}
The theory of multiplicative Lie algebra has evolved significantly over the past few years. In this development, Pandey and Upadhyay \cite{MS} introduced the concept of isoclinism in the setting of multiplicative Lie algebras, which can be helpful in the classification of multiplicative Lie algebra structures in a given group. The notion of isoclinism was established by P. Hall for the classification of p-groups \cite{P.Hall}. In 1994, Moneyhun \cite{Moneyhun} extended the notion of isoclinism to Lie algebras. Extending the idea of isoclinism in Lie algebras, the authors \cite{Mohammadzadeh} defined isoclinic extension and proved that the covers of a finite dimensional Lie algebra are mutually isoclinic. So, exploring the theory of isoclinic extensions to multiplicative Lie algebras is a matter of interest. In their work \cite{AMS}, Kumar et al. recently introduced the concept of capable multiplicative Lie algebras. The notion of a capable group, first introduced by Baer \cite{Baer}, involves the systematic investigation of conditions under which a group can serve as the group of inner automorphisms of another group. Also, significant developments have been made in studying capable Lie algebras. This further prompts us to explore the concept of capability for multiplicative Lie algebras.
A non-abelian tensor product of multiplicative Lie algebras was introduced by Donadze et al. in their work \cite{GNM}.  The authors proved that this notion recovers the existing concept of non-abelian tensor product of groups and Lie algebras.  In \cite{DASS}, several properties of Lie nilpotency and Lie solvability are discussed in the context of the non-abelian tensor product of multiplicative Lie algebras. Also,  upper bounds for the Lie nilpotency class and the Lie solvability length of the quotient $\frac{G\otimes H}{I}$ are established for some ideal $I$ of $G\otimes H$. This motivates us to explore the concepts of capability and isoclinism in the tensor product of multiplicative Lie algebras.
 
In this paper, we review some foundational concepts of multiplicative Lie algebras. Then, in section 3, we establish some results to examine the capability and isoclinism of the non-abelian tensor square of multiplicative Lie algebras. In section 4, we extend the concept of isoclinism to central extensions of multiplicative Lie algebras. We introduce the notion of isoclinic extensions for multiplicative Lie algebras and establish several fundamental results in this context. Specifically, we establish equivalent conditions for two central extensions of multiplicative Lie algebras to be isoclinic. We also demonstrate that the concept of isoclinism between two central extensions is equivalent to isomorphism under certain conditions. We see that every central extension is isoclinic to a stem extension. Finally, we explore the relationship between isoclinic extensions and the Schur multiplier of multiplicative Lie algebras. As an application, we prove that all stem covers are mutually isoclinic, and so covers of a multiplicative Lie algebra are mutually isoclinic.

\section{Preliminaries}
In this section, we recall some basic definitions concerning multiplicative Lie algebras, which will be used throughout this paper.
\begin{definition}\cite{GJ}
	A multiplicative Lie algebra is a triple $ (G,\cdot,\star), $ where $ (G,\cdot) $ is a group together with a binary operation $ \star $ on $G$ such that the following identities hold: 
	\begin{enumerate}
		\item $ x\star x=1 $  
		\item $ x\star(yz)=(x\star y){^y(x\star z)} $ 
		\item $ (xy)\star z= {^x(y\star z)} (x\star z) $ 
		\item $ ((x\star y)\star {^yz})((y\star z)\star{^zx})((z\star x)\star{^xy})=1 $ 
		\item $ ^z(x\star y)=(^zx\star {^zy})$ 
	\end{enumerate}
for all $x,y,z\in G$, where $^xy$ denotes $xyx^{-1}$. We say $ \star $ is a multiplicative Lie algebra structure on the group $G$.
\end{definition}
\begin{definition}\label{D1}
Let $(G,\cdot,\star)$ be a multiplicative Lie algebra. Then
\begin{enumerate}
\item A subgroup $H$ of $G$ is said to be a subalgebra of G if $x\star y \in H$ for all $x,y \in H$.

\item A subalgebra $H$ of $G$ is said to be an ideal of $G$ if it is a normal subgroup of $G$ and $x\star y \in H$ for all $x \in G$ and $y \in H$. The ideal generated by $\{ a\star b ~\mid ~ a, b \in G\} $ is denoted by $G\star G$.

\item Let $(G',\circ , \star')$ be another multiplicative Lie algebra. A group homomorphism $\psi: G \to G'$ is called a multiplicative Lie algebra homomorphism if $\psi(x\star y) =\psi(x) \star ' \psi(y)$ for all $x, y \in G$.

\item The ideal	$ LZ(G) = \{x \in G \mid  x \star  y = 1 $ for all $y\in G\}$  is called the Lie center of $G.$ 

\item The group center	$ Z(G) = \{x \in G \mid [x,y]=1 $ for all $y\in G\}$  is an ideal  of $G.$
\end{enumerate}
\end{definition}
\begin{remark}
\begin{enumerate}
    \item We denote the ideal $ LZ(G)\cap Z(G) $ of $G$ by $ \mathcal{Z}(G)$.

    \item The ideal $(G\star G)[G,G]$ of $G$ is denoted by  ${^M[G,G]}$.
\end{enumerate}
	 
\end{remark}

\begin{definition} \cite{GNM}
	Let $G$ and $H$ be two multiplicative Lie algebras. By an action of $G$ on $H$ we mean an underlying group action of  $G$ on  $H$ and $H$ on  $G$, together with a map $G \times H \to H, (x, y) \to \langle x, y \rangle,$ satisfying the
	following conditions:
\end{definition}
\hspace{2.5cm}	$ \langle x, yy' \rangle  = \langle x, y\rangle \langle ^yx, {^yy'}\rangle, $

\hspace{2.5cm}   $\langle xx', y \rangle  = \langle ^xx', {^xy} \rangle \langle x, y\rangle,$ 

\hspace{2.5cm}	$\langle (x \star x'), {^{x'}y} \rangle \langle ^yx, \langle x', y\rangle \rangle ^{-1} \langle ^xx', {\langle x, y \rangle} ^{-1}\rangle^{-1} = 1, $

\hspace{2.5cm}	$\langle ^{y'}x, (y\star y')\rangle \langle  {\langle y, x\rangle}^{-1}, {^yy'}\rangle^{-1} \langle  {\langle y', x\rangle}, {^xy}\rangle ^{-1} = 1,$  

where $x, x' \in G, y, y'\in H, {^xx'} = xx'x^{-1}, {^yy'} = yy'y^{-1}$ and  ${^xy}$ and ${^yx}$ denote group actions of $x$ on $y$ and $y$ on $x$, respectively.


\begin{definition} \cite{GNM} 
	Let $G$ and $H$ be two multiplicative Lie algebras acting on each other. Then the non-abelian tensor product $G \otimes H$ is the multiplicative Lie algebra generated by the symbols $x \otimes y$  subject to the following relations:
\end{definition}
\begin{enumerate}
	\item 	$x \otimes (yy') = (x \otimes y) {(^yx \otimes ^yy')}$ \label{C11} 
	
	\item   $(xx') \otimes y = {(^xx' \otimes ^xy)} (x \otimes y)$ \label{C7}
	
	\item $((x \star x') \otimes {^{x'}y}) (^yx \otimes \langle x', y\rangle)^{-1} ({^xx'} \otimes {\langle x, y \rangle} ^{-1})^{-1} = 1$ \label{C8}
	
	\item $({^{y'}x}  \otimes (y\star y')) ( {\langle y, x\rangle}^{-1}  \otimes {^yy'})^{-1} ( {\langle y', x\rangle}  \otimes {^xy})^{-1} = 1$ \label{C9} 
	
	\item $ ((x \otimes y) \star (x' \otimes y')) = {\langle y, x \rangle}^{-1} \otimes {\langle x', y' \rangle}. $ \label{C10}
	
\end{enumerate}

for all $x, x' \in G \ \text{and} \ y, y' \in H.$


\begin{definition}\cite{AMS,RLS}\label{D2}
\begin{enumerate} 
    \item A short exact sequence 
   $$\mathcal{E}\equiv  \xymatrix{1\ar[r] & H\ar[r]^{\alpha} & G\ar[r]^{\beta} & K\ar[r] & 1}$$ of multiplicative Lie algebras is called an extension of $H$ by $K$.

    \item An extension $\mathcal{E}$ of a multiplicative Lie algebra $H$ by $K$ is called a central extension if $H\subseteq \mathcal{Z}(G)$.

    \item 	A central extension $$\mathcal{C}\equiv  \xymatrix{1\ar[r] & H\ar[r]^{i} & G\ar[r]^{\beta} & K\ar[r] & 1}$$ of multiplicative Lie algebras is called an stem extension if $H \subseteq {^M[G,G]}.$ If, in addition, $H\cong \tilde M(K)$ (The Schur multiplier of multiplicative Lie algebra $K$), the above extension is called a stem cover. In this case, $G$ is said to be a cover of $K$.
    
    \item Let $EXT$ denote the category whose objects are short exact sequences of multiplicative Lie algebras. A morphism from a short exact sequence $\mathcal{E}$ to a short exact sequence  $$\mathcal{E'}\equiv  \xymatrix{1\ar[r] & H'\ar[r]^{\alpha'} & G'\ar[r]^{\beta'} & K'\ar[r] & 1}$$
    is a triple $(\lambda,\mu,\nu)$, where $\lambda$ is a homomorphism from $H$ to $H'$, $\mu$ is a  homomorphism from $G$ to $G'$, and $\nu$ is a homomorphism from $K$ to $K'$ such that the relevant diagram is commutative.
\end{enumerate}

\end{definition}


    \begin{definition}\cite{MS}\label{isoclinism_def}
Two  multiplicative Lie algebras $G_1$ and $G_2$ are said  to be isoclinic (written as $G_1 \sim_{ml} G_2$) if  there exist multiplicative Lie algebra isomorphisms $\lambda:\frac{G_1}{\mathcal{Z}(G_1)} \to \frac{G_2}{\mathcal{Z}(G_2)}$ and $\mu:\ {^M[G_1, G_1]}\to \ {^M[G_2, G_2]}$ such that the following diagram

\[
\xymatrix{
^M[G_1, G_1]\ar[d]^{\mu} & \frac{G_1}{\mathcal{Z}(G_1)}\times \frac{G_1}{\mathcal{Z}(G_1)}\ar[l]^{\phi_{c}~~~~~}  \ar[r]_{~~~~\phi_{s}} \ar[d]_{\lambda\times \lambda} &^M[G_1, G_1]  \ar[d]_{\mu} \\
^M[G_2, G_2] & \frac{G_2}{\mathcal{Z}(G_2)}\times \frac{G_2}{\mathcal{Z}(G_2)}\ar[l]_{\psi_{c}~~~~~~} \ar[r]^{~~~~\psi_{s}} & ^M[G_2,G_2]
}
\]
is commutative. The pair $(\lambda,\mu)$ is called an isoclinism between the multiplicative Lie algebras $G_1$ and $G_2$.
\end{definition}
\section{Some results on tensor square of multiplicative Lie algebra}
We start the section by defining the tensor square of a multiplicative Lie algebra. 

Let $G$ be a multiplicative Lie algebra. Then, the action of the underlying group on itself is defined by conjugation with $\langle, \rangle:= \star$ is an action of multiplicative Lie algebra $G$ on itself. In this case, 
 we call $G \otimes G$ the non-abelian tensor square (sometimes the tensor square) of $G$. 
 
  Now, if we have a Lie capable multiplicative Lie algebra $G$ (see \cite{AMS}),  is $G\otimes G$  also Lie capable? If not, then under which conditions is Lie capable? Also, if $G_1$ and $G_2$ are isoclinic multiplicative Lie algebras, then what can we say about $G_1\otimes G_1$ and $G_2\otimes G_2$ in the sense of isoclinism? To address these questions, we begin by establishing the following lemmas.
\begin{lemma}\label{Trivial}
    Let $I$ be an ideal of a multiplicative Lie algebra $G$. Then 
    \begin{enumerate}
    \item If $I\cap [G,G]=1$, then ${{Z}\big(\frac{G}{I}\big)} =\frac{{Z}(G)}{I}$.
    
    \vspace{.15cm}
    \item If $I\cap (G\star G)=1$, then ${{LZ}\big(\frac{G}{I}\big)} =\frac{{LZ}(G)}{I}$.
    
    \vspace{.15cm}
    \item If $I\cap {^M[G,G]}=1$, then ${\mathcal{Z}\big(\frac{G}{I}\big)} =\frac{\mathcal{Z}(G)}{I}$.
    
    \end{enumerate}
   
\end{lemma}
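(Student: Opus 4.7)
The plan is to prove each of the three parts by establishing the two containments, using the hypothesis in each case to show that the obvious map $Z(G)/I \to Z(G/I)$ (and analogues) is a bijection. Throughout, I will first check that $I$ sits inside the relevant center, so that the quotient on the right-hand side even makes sense as a subgroup.

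For part (1), I would begin by observing that for any $i \in I$ and any $g \in G$, the commutator $[i,g]$ lies in $I$ (since $I$ is normal) and in $[G,G]$, hence in $I\cap[G,G]=1$; thus $I \subseteq Z(G)$, making the expression $Z(G)/I$ meaningful. The containment $Z(G)/I \subseteq Z(G/I)$ is immediate from the definition. For the reverse containment, suppose $xI \in Z(G/I)$. Then for every $y \in G$, we have $[x,y]\,I = I$, so $[x,y] \in I \cap [G,G] = 1$. Hence $x \in Z(G)$ and $xI \in Z(G)/I$.

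Part (2) runs entirely in parallel, replacing group commutators by the $\star$ operation and using the property that $I$ is an ideal (so $I \star G \subseteq I$). Concretely, for $i \in I$, $g \in G$ one has $i\star g \in I \cap (G\star G) = 1$, so $I \subseteq LZ(G)$; and if $xI \in LZ(G/I)$ then $(xI)\star(yI) = (x\star y)I = I$ gives $x\star y \in I\cap(G\star G)=1$ for all $y$, whence $x \in LZ(G)$.

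Part (3) I plan to deduce formally from (1) and (2). Since $[G,G] \subseteq {^M[G,G]}$ and $G\star G \subseteq {^M[G,G]}$, the assumption $I \cap {^M[G,G]} = 1$ forces both $I \cap [G,G] = 1$ and $I \cap (G\star G) = 1$. Applying parts (1) and (2) gives $Z(G/I) = Z(G)/I$ and $LZ(G/I) = LZ(G)/I$. Since $I$ is contained in both $Z(G)$ and $LZ(G)$, the standard identity $(A/I) \cap (B/I) = (A\cap B)/I$ yields
\[
\mathcal{Z}(G/I) \;=\; Z(G/I) \cap LZ(G/I) \;=\; \frac{Z(G)\cap LZ(G)}{I} \;=\; \frac{\mathcal{Z}(G)}{I}.
\]

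There is no real obstacle here; the one subtlety worth being careful about is to verify at the start of each part that $I$ is contained in the corresponding center so that the displayed quotients are well-defined, and this is precisely where the hypothesis on $I$ intersecting the derived-type ideal trivially is used.
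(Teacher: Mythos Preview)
Your proof is correct and matches the paper's approach for parts (1) and (2) essentially verbatim: first verify $I$ lies in the relevant center using the triviality hypothesis, then check both containments. For part (3) you make a small improvement: whereas the paper simply says ``similarly, one can prove (3),'' you observe that the hypothesis $I\cap{^M[G,G]}=1$ implies both $I\cap[G,G]=1$ and $I\cap(G\star G)=1$, and then deduce the result formally from parts (1) and (2) via the identity $(A/I)\cap(B/I)=(A\cap B)/I$; this is cleaner than repeating the direct argument.
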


\begin{proof}
\textit{(1)} Follows directly from group-theoretic proof.

\textit{(2)}	Let $g\in G$ and $h\in I$. Then $(g\star h) \in I\cap (G\star G)=1$. This implies $g\star h =1$. Thus $h\in {LZ}(G)$. Therefore $I\subseteq {LZ}(G)$ and so $I$ is also an ideal of ${LZ}(G)$. Now, let $gI\in G/I$. Then
	\begin{align*}
	gI \in {LZ}(G/I)	\iff& gI\star g_1I=I~  \text{for all} ~ g_1I\in G/I\\
		\iff& g\star g_1\in I \\
		\iff& (g\star g_1) \in I\cap (G\star G) \\
		\iff& g \in {LZ}(G) \\
			\iff& gI \in {LZ}(G)/I.
	\end{align*}
 Similarly, one can prove \textit{(3)}.
 \end{proof}
 \begin{lemma}\label{sixth_lemma}
     Let  $I$ be an ideal of a multiplicative Lie algebra $G$. Then $$\frac{G}{I}\otimes \frac{G}{I} \cong \frac{G\otimes G}{(I\otimes G)(G\otimes I)}.$$
 \end{lemma}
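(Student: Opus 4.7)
The plan is to construct mutually inverse homomorphisms between the two sides using the universal property of the non-abelian tensor product. Throughout, let $N := (I\otimes G)(G\otimes I)$, viewed as the subgroup of $G\otimes G$ generated by the tensors $i\otimes g$ and $g\otimes i$ with $i\in I$, $g\in G$; a preliminary step is to check that $N$ is in fact an ideal of $G\otimes G$. This follows because the generators of $G\otimes G$ act on $N$ via conjugation and via the Lie bracket (relation \ref{C10}) in a way that sends $i\otimes g$ and $g\otimes i$ back into $N$, using the compatibility relations of the action together with the fact that $I$ is an ideal of $G$ (so $\langle x,i\rangle = x\star i\in I$ and ${}^xi\in I$ for all $x\in G$, $i\in I$).

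Next I would define a map $\phi\colon G\otimes G\to (G/I)\otimes(G/I)$ on generators by $\phi(x\otimes y)=xI\otimes yI$. To see that $\phi$ extends to a multiplicative Lie algebra homomorphism, I would verify each of the five defining relations \ref{C11}--\ref{C10} of the tensor product: each of them is a universal identity in the variables $x,x',y,y'$, and so passing to the images $xI,x'I,yI,y'I$ in $G/I$ preserves the relation (noting that the action of $G/I$ on itself is induced from that of $G$ on itself, and the Lie operation $\star$ on $G/I$ is induced from that on $G$). Since $xI=I$ whenever $x\in I$, we have $\phi(i\otimes g)=1=\phi(g\otimes i)$ for $i\in I$, $g\in G$, so $N\subseteq\ker\phi$ and $\phi$ induces $\overline{\phi}\colon (G\otimes G)/N\to (G/I)\otimes(G/I)$.

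For the inverse, I would define $\psi\colon(G/I)\otimes(G/I)\to (G\otimes G)/N$ on generators by $\psi(xI\otimes yI)=(x\otimes y)N$. The main point here is well-definedness: if $xI=x'I$ and $yI=y'I$, then $x'=xi$ and $y'=yj$ for some $i,j\in I$, and one needs to check $(x'\otimes y')N=(x\otimes y)N$. Expanding $(xi)\otimes(yj)$ via relations \ref{C11} and \ref{C7} writes it as a product of $x\otimes y$ and terms each of which has at least one tensor factor lying in $I$ (after pushing ${}^x(\cdot)$ and ${}^y(\cdot)$ through, which preserves $I$ since $I$ is an ideal); thus the extra factors lie in $N$. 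Once well-definedness is established, verifying that $\psi$ respects the five tensor-product relations is routine since it mirrors the computation for $\phi$.

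Finally, $\overline{\phi}\circ\psi$ and $\psi\circ\overline{\phi}$ are the identities on generators, so both are the identity. The main obstacle, as usual with tensor-product-style lemmas, is the well-definedness of $\psi$: the careful accounting that the ``error terms'' produced when replacing a coset representative by another one all lie in $N$. The verification of the relations for $\phi$ is formally straightforward because $G\to G/I$ is a multiplicative Lie algebra homomorphism compatible with the self-action, but the $\psi$ direction requires genuine use of the relations to split off factors in $I\otimes G$ or $G\otimes I$.
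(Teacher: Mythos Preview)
Your proposal is correct, and it shares with the paper's proof the forward map $\phi\colon G\otimes G\to (G/I)\otimes(G/I)$, $g_1\otimes g_2\mapsto g_1I\otimes g_2I$, together with the observation that $N=(I\otimes G)(G\otimes I)\subseteq\ker\phi$. The difference lies in how the induced map $\overline{\phi}$ is shown to be an isomorphism. The paper argues directly that $\ker\phi\subseteq N$: it takes ``without loss of generality'' a simple tensor $g_1\otimes g_2$ in the kernel, asserts the existence of $h,k\in I$ with $g_1h\otimes g_2k=1$ in $G\otimes G$, and then expands via relations \ref{C11}--\ref{C7} to exhibit $g_1\otimes g_2$ as a product of elements of $N$. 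Your route instead invokes the universal property of $(G/I)\otimes(G/I)$ to build an explicit inverse $\psi$, reducing the problem to the well-definedness check $(xi\otimes yj)N=(x\otimes y)N$ for $i,j\in I$---which is exactly the same tensor-relation computation the paper performs, but now as the key step rather than as a consequence of a kernel assumption. Your approach is the more robust of the two: the paper's reduction to simple tensors and the existence claim for $h,k$ are not justified in the text, whereas constructing $\psi$ sidesteps both issues and makes the use of the universal property transparent.
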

\begin{proof}
We define a map $\phi: G \otimes G \to \frac{G}{I}\otimes \frac{G}{I}$ by $\phi(g_1\otimes g_2)=g_1I\otimes g_2I$. Then, $\phi$ is well-defined. To prove that $\phi$ is a homomorphism, we need to show the following identities:
	\begin{gather*}
	\phi(g_1g_2\otimes g_1')= \phi(^{g_1}g_2\otimes {^{g_1}}g_1') \phi(g_1\otimes g_1')\\
  \phi(g_1\otimes g_1'g_2')= \phi(g_1\otimes g_1') \phi(^{g_1'}g_1\otimes {^{g_1'}}g_2')\\
   \phi((g_1\star g_2)\otimes {^{g_2}g_1'})\phi({^{g_1'}g_1} \otimes (g_2\star g_1'))^{-1} \phi({^{g_1}g_2} \otimes (g_1'\star g_1))^{-1} =1\\
 \phi(^{g_2'}g_1 \otimes(g_1'\star g_2')) \phi((g_1\star g_1')  \otimes {^{g_1'}g_2'})^{-1} \phi((g_2'\star g_1) \otimes {^{g_1}g_1'})^{-1} =1\\
    \phi((g_1\otimes g_2) \star (g_1' \otimes g_2')) = \phi((g_1\star g_2)  \otimes (g_1'\star g_2'))
	\end{gather*}
	We will prove the first and third identities using \cite[Definition 3.2]{GNM}; the rest are similar and easy to prove.
	\begin{multline*}
			\phi(g_1g_2\otimes g_1')  = g_1g_2I \otimes g_1'I= g_1Ig_2I \otimes g_1'I
		= (^{g_1I}g_2I\otimes {^{g_1I}g_1'I})(g_1I \otimes g_1'I) \\
		= (^{g_1}g_2I\otimes {^{g_1}g_1'I})(g_1I \otimes g_1'I) 
		 = \phi(^{g_1}g_2\otimes {^{g_1}}g_1') \phi(g_1\otimes g_1').
	\end{multline*}
 Next,
   \begin{multline*}
   		\phi((g_1\star g_2)\otimes {^{g_2}g_1'})\phi({^{g_1'}g_1} \otimes (g_2\star g_1'))^{-1} \phi({^{g_1}g_2} \otimes (g_1'\star g_1))^{-1} \\ = 	((g_1\star g_2)I\otimes {^{g_2}g_1'}I)({^{g_1'}g_1}I \otimes (g_2\star g_1')I)^{-1} ({^{g_1}g_2}I \otimes (g_1'\star g_1)I)^{-1} \\ = 	((g_1I\star g_2I)\otimes {^{g_2I}g_1'I})({^{g_1'I}g_1I} \otimes (g_2I\star g_1'I))^{-1} ({^{g_1I}g_2I} \otimes (g_1'I\star g_1I))^{-1} = 1.
   \end{multline*}

Clearly, $(I\otimes G)(G\otimes I) \subseteq \ker \phi$.  Without loss of generality, let $(g_1\otimes g_2) \in \ker \phi$, $\phi(g_1\otimes g_2)=g_1I \otimes g_2I=1$. Suppose  $g_1h\otimes g_2k=1$ for some $h,k\in I$. It follows that $(g_1\otimes g_2)=(^{g_1}h\otimes {^{g_1}g_2})^{-1}(^{g_2}{(g_1h)}\otimes {^{g_2}k})^{-1}\in (I\otimes G)(G\otimes I)$. Thus $\ker \phi \subseteq (I\otimes G)(G\otimes I)$. Hence $\phi$ induces an isomorphism $\tilde \phi:\frac{G \otimes G}{(I\otimes G)(G\otimes I)} \to \frac{G}{I}\otimes \frac{G}{I}$.
\end{proof}

 \begin{lemma}\label{contained}
     For a multiplicative Lie algebra $G$, the following conditions hold:
    \begin{enumerate}
    \item $({Z}(G)\otimes G)(G\otimes{Z}(G))\subseteq {Z}(G\otimes G)$. 

    \item $({LZ}(G)\otimes G)(G\otimes{LZ}(G))\subseteq {LZ}(G\otimes G)$.
    
    \item  $(\mathcal{Z}(G)\otimes G)(G\otimes \mathcal{Z}(G))\subseteq \mathcal{Z}(G\otimes G)$. 
    \end{enumerate}

 \end{lemma}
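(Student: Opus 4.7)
The plan is to prove each of (1), (2), (3) by showing that every generator $z \otimes g$ of the left-hand subgroup (with $z$ in the stated center) lies in the corresponding center of $G \otimes G$; since these centers are subgroups, this suffices. As a preliminary, I would record $1 \otimes y = y \otimes 1 = 1$ in $G \otimes G$, which follows from defining relation (2) by setting $x = y$, $x' = 1$ to obtain $y \otimes y = (1 \otimes y)(y \otimes y)$, and the symmetric consequence of (1).

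Part (2) is essentially a one-line computation from defining relation (5) of the tensor square. For $z \in LZ(G)$, any $g \in G$, and any generator $x \otimes y$ of $G \otimes G$,
\[(z \otimes g) \star (x \otimes y) \;=\; (g \star z)^{-1} \otimes (x \star y) \;=\; 1 \otimes (x \star y) \;=\; 1,\]
using the $\star$-antisymmetry $g \star z = (z \star g)^{-1} = 1$ (which holds in any multiplicative Lie algebra, e.g.\ by expanding $(zg) \star (zg) = 1$ via axioms (2)--(3) of Definition 2.1). The mirror calculation handles $(x \otimes y) \star (z \otimes g)$ and the generators $g \otimes z$. Extension from generators to arbitrary elements of $G \otimes G$ is automatic from the Leibniz-type axioms of a multiplicative Lie algebra.

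For part (1), the heart of the argument is to expand $(zx) \otimes (gy)$ in two different orders. Applying relation (2) first and then (1), and using centrality of $z$ throughout (so ${}^{z}w = w$ and ${}^{w}z = z$ for all $w \in G$), I obtain
\[(zx) \otimes (gy) \;=\; (x \otimes g)({}^{g}x \otimes {}^{g}y)(z \otimes g)(z \otimes {}^{g}y).\]
Applying (1) first and then (2), with the same centrality simplifications (noting in particular ${}^{g}(zx) = z \cdot {}^{g}x$), gives instead
\[(zx) \otimes (gy) \;=\; (x \otimes g)(z \otimes g)({}^{g}x \otimes {}^{g}y)(z \otimes {}^{g}y).\]
Equating these and canceling $(x \otimes g)$ on the left and $(z \otimes {}^{g}y)$ on the right yields $({}^{g}x \otimes {}^{g}y)(z \otimes g) = (z \otimes g)({}^{g}x \otimes {}^{g}y)$. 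Because $(x,y) \mapsto ({}^{g}x, {}^{g}y)$ is a bijection on $G \times G$, this says $z \otimes g$ commutes with every generator of $G \otimes G$, so $z \otimes g \in Z(G \otimes G)$; the symmetric argument gives $g \otimes z$.

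Part (3) then drops out of (1), (2), and the identity $\mathcal{Z}(G \otimes G) = Z(G \otimes G) \cap LZ(G \otimes G)$: for $z \in \mathcal{Z}(G) = Z(G) \cap LZ(G)$, the generator $z \otimes g$ lies in both the group center and the Lie center of $G \otimes G$, hence in $\mathcal{Z}(G \otimes G)$, and similarly for $g \otimes z$. The only genuine obstacle is the bookkeeping in part (1): the two expansions of $(zx) \otimes (gy)$ must be organized so that centrality of $z$ collapses exactly the right conjugations, leaving a clean cancellation that exposes the commutation. Parts (2) and (3) are, by comparison, essentially immediate from relation (5) and the definition of $\mathcal{Z}$.
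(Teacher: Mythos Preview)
Your proof is correct. For parts (2) and (3) it coincides with the paper's argument: both apply relation~(5) of the tensor square to reduce $(z\otimes g)\star(x\otimes y)$ to $1\otimes(x\star y)=1$, and both obtain (3) from (1) and (2) via $\mathcal{Z}=Z\cap LZ$.

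The genuine difference is in part~(1). The paper does not carry out your double expansion; it simply invokes the commutator identity
\[
[g\otimes h,\,g'\otimes h']\;=\;[g,h]\otimes[g',h']
\]
for the tensor square, from which the claim is immediate (if $z\in Z(G)$ then $[z\otimes g,\,g'\otimes h']=[z,g]\otimes[g',h']=1\otimes[g',h']=1$). Your route is instead a self-contained derivation from relations~(1)--(2) of the tensor product: expanding $(zx)\otimes(gy)$ in two orders and cancelling yields the commutation of $z\otimes g$ with every generator. What you gain is that your argument does not appeal to an identity the paper neither proves nor cites explicitly; what the paper's route buys is brevity once one grants that identity (which is the commutator analogue of relation~(5) and can itself be established by exactly the kind of double-expansion you perform).
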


\begin{proof}
\textit{(1)} Follows from the identity $[g\otimes h, g'\otimes h']=[g,h]\otimes [g',h']$ for all $g, g', h, h'\in G$.

\textit{(2)} Without any loss, we suppose $ (z\otimes g)\in {LZ}(G)\otimes G $ and $(g_1\otimes g_2) \in G\otimes G$, where $z\in {LZ}(G)$ and $g,g_1,g_2 \in G$. Then  $ (z\otimes g)\star (g_1\otimes g_2)= (z\star g)\otimes (g_1\star g_2)=1 $ . This implies  $ (z\otimes g)\in {LZ}(G\otimes G)$. Hence, the result follows.

Similarly, one can easily prove \textit{(3)}.
\end{proof}
 \begin{proposition}\label{quotient}
   Let $G$ be a Lie capable multiplicative Lie algebra. Then $\frac{G\otimes G}{I}$ is also Lie capable for some ideal $I$.
\end{proposition}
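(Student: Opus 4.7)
The plan is to leverage the two lemmas just established. Since $G$ is Lie capable, by definition there exists a multiplicative Lie algebra $H$ together with an isomorphism $G \cong H/\mathcal{Z}(H)$; this $H$ will be the ambient object of the argument, and the question about $G \otimes G$ will be translated into a question about $H \otimes H$.

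First, I would apply Lemma~\ref{sixth_lemma} to the ideal $\mathcal{Z}(H)$ of $H$. This yields the identification
\[
G\otimes G \;\cong\; \frac{H}{\mathcal{Z}(H)}\otimes \frac{H}{\mathcal{Z}(H)} \;\cong\; \frac{H\otimes H}{J},
\qquad J:=(\mathcal{Z}(H)\otimes H)(H\otimes \mathcal{Z}(H)).
\]
Next, Lemma~\ref{contained}(3) supplies the key containment $J\subseteq \mathcal{Z}(H\otimes H)$. Hence $\mathcal{Z}(H\otimes H)/J$ is a well-defined ideal of $(H\otimes H)/J$, and corresponds through the isomorphism above to an ideal $I$ of $G\otimes G$.

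Finally, the third isomorphism theorem (valid in the category of multiplicative Lie algebras) gives
\[
\frac{G\otimes G}{I}\;\cong\;\frac{(H\otimes H)/J}{\mathcal{Z}(H\otimes H)/J}\;\cong\;\frac{H\otimes H}{\mathcal{Z}(H\otimes H)},
\]
and the right-hand side is Lie capable by definition, being the quotient of $H\otimes H$ by its multiplicative Lie center. This produces the required ideal.

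I do not anticipate a serious obstacle; the argument is essentially a packaging of Lemmas~\ref{sixth_lemma} and~\ref{contained}(3). The point most deserving of attention is verifying that Lemma~\ref{sixth_lemma} genuinely produces an isomorphism of multiplicative Lie algebras, so that the ideal correspondence and the third isomorphism theorem can be applied honestly in this category; this is already encoded in the statement of that lemma. A minor caveat is that if $J=\mathcal{Z}(H\otimes H)$ the resulting $I$ is trivial, in which case $G\otimes G$ itself is already Lie capable, which is consistent with the phrase \emph{for some ideal $I$} in the statement.
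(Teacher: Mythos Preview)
Your proof is correct and follows essentially the same route as the paper: write $G\cong E/\mathcal{Z}(E)$, apply Lemma~\ref{sixth_lemma} to identify $G\otimes G$ with $(E\otimes E)/J$ for $J=(\mathcal{Z}(E)\otimes E)(E\otimes \mathcal{Z}(E))$, and then use the third isomorphism theorem together with $J\subseteq \mathcal{Z}(E\otimes E)$ to exhibit $(G\otimes G)/I\cong (E\otimes E)/\mathcal{Z}(E\otimes E)$. The only difference is cosmetic: you make the appeal to Lemma~\ref{contained}(3) explicit, whereas the paper leaves that containment implicit.
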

\begin{proof}
    Since $G$ is Lie capable, $G\cong \frac{E}{\mathcal{Z}(E)}$ for some multiplicative Lie algebra $E$. Thus, we have $G\otimes G \cong \frac{E}{\mathcal{Z}(E)}\otimes \frac{E}{\mathcal{Z}(E)} \cong \frac{E\otimes E}{(\mathcal{Z}(E)\otimes E)(E\otimes \mathcal{Z}(E))}$ by Lemma \ref{sixth_lemma}. Also, $\frac{E\otimes E}{\mathcal{Z}(E\otimes E)} \cong \frac{\frac{E\otimes E}{(\mathcal{Z}(E)\otimes E)(E\otimes \mathcal{Z}(E))}}{\frac{\mathcal{Z}(E\otimes E)}{(\mathcal{Z}(E)\otimes E)(E\otimes \mathcal{Z}(E))}}\cong \frac{G\otimes G}{I}$, where $I=\frac{\mathcal{Z}(E\otimes E)}{(\mathcal{Z}(E)\otimes E)(E\otimes \mathcal{Z}(E))}$ is an ideal of $G\otimes G$. Hence, the result follows.

\end{proof}
\begin{corollary}
  Let $G$ be a Lie capable multiplicative Lie algebra, i.e., $G \cong \frac{E}{\mathcal{Z}(E)}$ for some multiplicative Lie algebra $E$. If ${(\mathcal{Z}(E)\otimes E)(E\otimes \mathcal{Z}(E))}\cap {^M[E\otimes E,E\otimes E]}=1$, then $\frac{G\otimes G}{\mathcal{Z}(G\otimes G)}$ is also Lie capable.
\end{corollary}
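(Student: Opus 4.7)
The plan is to build directly on Proposition \ref{quotient} and show that, under the additional hypothesis, the ideal $I$ appearing there coincides with $\mathcal{Z}(G\otimes G)$. Once this identification is made, the conclusion $\frac{G\otimes G}{\mathcal{Z}(G\otimes G)}$ is Lie capable follows immediately from the proposition.

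First, I would recall from the proof of Proposition \ref{quotient} that we have the isomorphism
$$G\otimes G \;\cong\; \frac{E\otimes E}{(\mathcal{Z}(E)\otimes E)(E\otimes \mathcal{Z}(E))},$$
obtained by combining $G\cong E/\mathcal{Z}(E)$ with Lemma \ref{sixth_lemma}, and that the ideal produced there is $I=\frac{\mathcal{Z}(E\otimes E)}{(\mathcal{Z}(E)\otimes E)(E\otimes \mathcal{Z}(E))}$, with $\frac{E\otimes E}{\mathcal{Z}(E\otimes E)} \cong \frac{G\otimes G}{I}$.

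Next, I would apply Lemma \ref{Trivial}(3) to the multiplicative Lie algebra $E\otimes E$ with the ideal $J=(\mathcal{Z}(E)\otimes E)(E\otimes \mathcal{Z}(E))$. The hypothesis of the corollary is precisely the condition $J\cap {^M[E\otimes E,E\otimes E]}=1$ required by the lemma, so
$$\mathcal{Z}\!\left(\frac{E\otimes E}{J}\right) \;=\; \frac{\mathcal{Z}(E\otimes E)}{J}.$$
Transporting this equality across the isomorphism $G\otimes G \cong (E\otimes E)/J$ identifies the ideal $I$ with $\mathcal{Z}(G\otimes G)$.

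Combining these two steps gives
$$\frac{G\otimes G}{\mathcal{Z}(G\otimes G)} \;\cong\; \frac{G\otimes G}{I} \;\cong\; \frac{E\otimes E}{\mathcal{Z}(E\otimes E)},$$
and the rightmost quotient exhibits $\frac{G\otimes G}{\mathcal{Z}(G\otimes G)}$ as the quotient of the multiplicative Lie algebra $E\otimes E$ by its ideal $\mathcal{Z}(E\otimes E)$, which is exactly the definition of Lie capability. The only potential obstacle is a careful bookkeeping check that the ideal $I$ of Proposition \ref{quotient} genuinely corresponds to $\mathcal{Z}\!\left(\frac{E\otimes E}{J}\right)$ under the canonical isomorphism, but this is routine once Lemma \ref{Trivial}(3) is invoked under the stated intersection hypothesis.
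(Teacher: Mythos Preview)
Your proof is correct and follows essentially the same route as the paper: both use Lemma~\ref{sixth_lemma} to identify $G\otimes G$ with $(E\otimes E)/J$ for $J=(\mathcal{Z}(E)\otimes E)(E\otimes \mathcal{Z}(E))$, then invoke Lemma~\ref{Trivial}(3) under the stated intersection hypothesis to obtain $\mathcal{Z}\big((E\otimes E)/J\big)=\mathcal{Z}(E\otimes E)/J$, and conclude $\frac{G\otimes G}{\mathcal{Z}(G\otimes G)}\cong \frac{E\otimes E}{\mathcal{Z}(E\otimes E)}$. Your explicit linkage back to the ideal $I$ of Proposition~\ref{quotient} is a helpful clarification but not a substantive departure.
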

\begin{proof}
   By Lemma \ref{sixth_lemma}, we have $G\otimes G  \cong \frac{E\otimes E}{(\mathcal{Z}(E)\otimes E)(E\otimes \mathcal{Z}(E))}$. Moreover, ${\mathcal{Z}(G\otimes G)}\cong {\mathcal{Z}\big(\frac{E\otimes E}{(\mathcal{Z}(E)\otimes E)(E\otimes \mathcal{Z}(E))}\big)} = \frac{\mathcal{Z}(E\otimes E)}{(\mathcal{Z}(E)\otimes E)(E\otimes \mathcal{Z}(E))} $ by Lemma \ref{Trivial}. Hence, we have $\frac{\frac{E\otimes E}{(\mathcal{Z}(E)\otimes E)(E\otimes \mathcal{Z}(E))}}{\mathcal{Z}\big(\frac{E\otimes E}{(\mathcal{Z}(E)\otimes E)(E\otimes \mathcal{Z}(E))}\big)}\cong \frac{G\otimes G}{\mathcal{Z}(G\otimes G)}$ and we are done.
\end{proof}

\begin{proposition}
    Let $G$ be a Lie capable multiplicative Lie algebra, i.e., $G \cong \frac{E}{\mathcal{Z}(E)}$ for some multiplicative Lie algebra $E$. If $\mathcal{Z}(E\otimes E) = {(\mathcal{Z}(E)\otimes E)(E\otimes \mathcal{Z}(E))}$, then $G\otimes G$ is also Lie capable.
\end{proposition}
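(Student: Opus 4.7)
The plan is to exhibit an explicit multiplicative Lie algebra whose quotient by its $\mathcal{Z}$-center is isomorphic to $G\otimes G$; the natural candidate is $E\otimes E$. Since $G$ is Lie capable with witness $E$, we have $G\cong E/\mathcal{Z}(E)$, so the first step is to substitute into $G\otimes G$ and invoke Lemma \ref{sixth_lemma} (applied with the ideal $I=\mathcal{Z}(E)$ of $E$) to obtain
\[
G\otimes G \;\cong\; \frac{E}{\mathcal{Z}(E)}\otimes\frac{E}{\mathcal{Z}(E)} \;\cong\; \frac{E\otimes E}{(\mathcal{Z}(E)\otimes E)(E\otimes\mathcal{Z}(E))}.
\]

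The second step is to feed in the hypothesis $\mathcal{Z}(E\otimes E) = (\mathcal{Z}(E)\otimes E)(E\otimes \mathcal{Z}(E))$: substituting the right-hand side of this equality for the denominator above yields
\[
G\otimes G \;\cong\; \frac{E\otimes E}{\mathcal{Z}(E\otimes E)}.
\]
By the very definition of Lie capability (taking $E\otimes E$ as the witnessing multiplicative Lie algebra), this isomorphism exhibits $G\otimes G$ as a Lie capable multiplicative Lie algebra, and we are done.

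There is essentially no obstacle here: the hypothesis has been tailored so that the containment $(\mathcal{Z}(E)\otimes E)(E\otimes\mathcal{Z}(E))\subseteq \mathcal{Z}(E\otimes E)$ of Lemma \ref{contained} becomes an equality, which is exactly what is needed to collapse the quotient appearing in Lemma \ref{sixth_lemma} into the form $H/\mathcal{Z}(H)$. The only point that must be checked carefully is the direction of the identification in Lemma \ref{sixth_lemma} (applied with $I=\mathcal{Z}(E)$, so that $I\otimes G$ and $G\otimes I$ become $\mathcal{Z}(E)\otimes E$ and $E\otimes\mathcal{Z}(E)$ respectively); beyond that, the argument is a two-line chain of isomorphisms.
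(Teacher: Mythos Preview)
Your proof is correct and follows essentially the same approach as the paper: apply Lemma~\ref{sixth_lemma} with $I=\mathcal{Z}(E)$ to get $G\otimes G\cong (E\otimes E)/\bigl((\mathcal{Z}(E)\otimes E)(E\otimes\mathcal{Z}(E))\bigr)$, then use the hypothesis to rewrite the denominator as $\mathcal{Z}(E\otimes E)$, exhibiting $G\otimes G$ as $(E\otimes E)/\mathcal{Z}(E\otimes E)$. The extra remark about Lemma~\ref{contained} is accurate context but not required for the argument.
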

\begin{proof}
  Since  $G\otimes G \cong \frac{E}{\mathcal{Z}(E)}\otimes \frac{E}{\mathcal{Z}(E)} \cong \frac{E\otimes E}{(\mathcal{Z}(E)\otimes E)(E\otimes \mathcal{Z}(E))}$ and $\mathcal{Z}(E\otimes E) = (E\otimes \mathcal{Z}(E))(\mathcal{Z}(E)\otimes E)$, we have $G\otimes G \cong \frac{E\otimes E}{\mathcal{Z}(E\otimes E)}$.
\end{proof}
Next, we provide some examples for a multiplicative Lie algebra $G$ for which the condition $\mathcal{Z}(G\otimes G) = (G\otimes \mathcal{Z}(G))(\mathcal{Z}(G)\otimes G)$ holds. 

\begin{example}
\begin{enumerate}
    \item If we consider a finite cyclic group $C_n$ of order $n$,  there is only trivial multiplicative Lie algebra structure on $C_n$. In this case, $\mathcal{Z}(C_n\otimes C_n) = (C_n\otimes \mathcal{Z}(C_n))(\mathcal{Z}(C_n)\otimes C_n)$.

    \item  Let us look at the Klein four group $V_4 = \langle a, b \mid a^2 = b^2 = 1, ab = ba \rangle$ with trivial multiplicative Lie algebra structure. It is easy to see that $\mathcal{Z}(V_4\otimes V_4) = V_4\otimes \mathcal{Z}(V_4)$ using \cite[Proposition 3.4]{GNM}. Hence, the required condition holds.

\end{enumerate}
  \end{example}

\begin{proposition}
	Let $G_1$ and $G_2$ be multiplicative Lie algebras such that $G_1 \sim_{ml} G_2$ via $(\lambda,\mu)$. If $\mathcal{Z}(G_i\otimes G_i) = {(\mathcal{Z}(G_i)\otimes G_i)(G_i\otimes \mathcal{Z}(G_i))}$ for $i=1,2$. Then $G_1\otimes G_1 \sim_{ml} G_2\otimes G_2$.
\end{proposition}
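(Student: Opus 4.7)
The plan is to construct an isoclinism $(\tilde\lambda,\tilde\mu)$ between $G_1\otimes G_1$ and $G_2\otimes G_2$ directly from the given isoclinism $(\lambda,\mu)$. The hypothesis on $\mathcal{Z}(G_i\otimes G_i)$ is what allows the center of the tensor square to be identified with the ideal $(\mathcal{Z}(G_i)\otimes G_i)(G_i\otimes\mathcal{Z}(G_i))$, which in turn, via Lemma \ref{sixth_lemma}, lets the corresponding quotient be identified with the tensor square of $G_i/\mathcal{Z}(G_i)$.

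Concretely, combining the hypothesis with Lemma \ref{sixth_lemma} gives for $i=1,2$ the identification
$$\frac{G_i\otimes G_i}{\mathcal{Z}(G_i\otimes G_i)}=\frac{G_i\otimes G_i}{(\mathcal{Z}(G_i)\otimes G_i)(G_i\otimes\mathcal{Z}(G_i))}\cong\frac{G_i}{\mathcal{Z}(G_i)}\otimes\frac{G_i}{\mathcal{Z}(G_i)}.$$
By functoriality of the non-abelian tensor product, the multiplicative Lie algebra isomorphism $\lambda$ induces an isomorphism $\lambda\otimes\lambda$ of the right-hand sides; composing with the above identifications yields the required $\tilde\lambda$. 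For $\tilde\mu$, the ideal ${^M[G_i\otimes G_i,G_i\otimes G_i]}$ is generated by elements $[u,v]$ and $u\star v$ with $u,v\in G_i\otimes G_i$, and these depend only on the images of $u,v$ modulo $\mathcal{Z}(G_i\otimes G_i)$. I therefore set $\tilde\mu([u,v])=[\tilde u,\tilde v]$ and $\tilde\mu(u\star v)=\tilde u\star \tilde v$, where $\tilde u\in G_2\otimes G_2$ is any lift of $\tilde\lambda(u\,\mathcal{Z}(G_1\otimes G_1))$. The isoclinism diagram of Definition \ref{isoclinism_def} then commutes by construction, and the symmetric construction applied to $(\lambda^{-1},\mu^{-1})$ supplies the inverse.

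The main technical obstacle is to verify that $\tilde\mu$ is well-defined on ${^M[G_1\otimes G_1,G_1\otimes G_1]}$: that the rule above respects every relation and does not depend on the choice of lift. Independence of the choice of $\tilde u$ follows from Lemma \ref{contained}, since any two such lifts differ by an element of $\mathcal{Z}(G_2\otimes G_2)$, which by centrality kills both commutator and $\star$-product. For compatibility with the defining relations, each of the five tensor-product identities in $G_1\otimes G_1$, when expanded in terms of commutators and $\star$-products of pure tensors, must carry through to the analogous identity in $G_2\otimes G_2$ under the lifting procedure; this is precisely what the compatibility of $(\lambda,\mu)$ with $[-,-]$ and $\star$ in the original isoclinism provides, and the resulting diagram chase, while long, is routine.
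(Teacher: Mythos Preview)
Your construction of $\tilde\lambda$ matches the paper's exactly: both invoke the hypothesis together with Lemma~\ref{sixth_lemma} to identify $\frac{G_i\otimes G_i}{\mathcal{Z}(G_i\otimes G_i)}$ with $\frac{G_i}{\mathcal{Z}(G_i)}\otimes\frac{G_i}{\mathcal{Z}(G_i)}$, and then transport along $\lambda\otimes\lambda$. Where you diverge is in building $\tilde\mu$. You proceed generically, choosing lifts through $\tilde\lambda$ and never invoking the original $\mu$ at all; the paper instead exploits the special tensor-square identities $[g_1\otimes g_2,g_3\otimes g_4]=[g_1,g_2]\otimes[g_3,g_4]$ and $(g_1'\otimes g_2')\star(g_3'\otimes g_4')=(g_1'\star g_2')\otimes(g_3'\star g_4')$ to realize every generator of ${^M[G_i\otimes G_i,G_i\otimes G_i]}$ as a pure tensor with entries in $[G_i,G_i]$ or $G_i\star G_i$, and then defines $\bar\mu$ by applying the given $\mu$ componentwise, e.g.\ $\bar\mu\bigl([g_1,g_2]\otimes[g_3,g_4]\bigr)=\mu[g_1,g_2]\otimes\mu[g_3,g_4]$. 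The paper's explicit formula makes well-definedness of $\bar\mu$ reduce to that of $\mu$, which is given; your lifting argument is correct in outline and makes commutativity of the isoclinism diagram automatic, but the well-definedness verification you call ``routine'' is less transparent than in the paper's approach, since it does not visibly use the datum $\mu$ from the original isoclinism.
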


\begin{proof}
	Given $\mathcal{Z}(G_i\otimes G_i) = {(\mathcal{Z}(G_i)\otimes G_i)(G_i\otimes \mathcal{Z}(G_i))}$. Therefore, by Lemma \ref{sixth_lemma}, we have  $\frac{G_i \otimes G_i}{\mathcal{Z}(G_i \otimes G_i)}\cong \frac{G_i}{\mathcal{Z}(G_i)}\otimes \frac{G_i}{\mathcal{Z}(G_i)}$.  Also $ \frac{G_1}{\mathcal{Z}(G_1)}\cong \frac{G_2}{\mathcal{Z}(G_2)}$ via $\lambda$ implies that  $\frac{G_1 \otimes G_1}{\mathcal{Z}(G_1 \otimes G_1)} \cong \frac{G_2 \otimes G_2}{\mathcal{Z}(G_2 \otimes G_2)}$  via $\bar \lambda$ (say). Next, we know  $\mu: {^M[G_1,G_1]} \to {^M[G_2,G_2]}$ is an isomorphism. By Definition \ref{isoclinism_def}, $\mu|_{(G_1 \star G_1)}: (G_1 \star G_1) \to (G_2 \star G_2)$ and $\mu|_{[G_1,G_1]} : [G_1,G_1] \to [G_2,G_2]$ are also isomorphisms. It is easy to see that the map $\bar\mu: {^M[G_1\otimes G_1, G_1\otimes G_1]}\to {^M[G_2\otimes G_2, G_2\otimes G_2]}$ defined by $$\bar{\mu}\left([(g_1\otimes g_2),(g_3\otimes g_4)]\right)= \bar{\mu}([g_1, g_2]\otimes [g_3,g_4])= \mu[g_1,g_2]\otimes \mu[g_3,g_4]$$ and  $$\bar{\mu}\left((g_1'\otimes g_2')\star(g_3'\otimes g_4')\right)= \bar{\mu}((g_1'\star g_2')\otimes (g_3'\star g_4'))= \mu(g_1'\star g_2')\otimes \mu(g_3'\star g_4')$$  is an isomorphism and the following diagram is commutative:
      \[
    \xymatrix{
    	^M[G_1\otimes G_1, G_1\otimes G_1]\ar[d]^{\tilde{\mu}} & \frac{G_1\otimes G_1}{\mathcal{Z}(G_1\otimes G_1)}\times \frac{G_1\otimes G_1}{\mathcal{Z}(G_1\otimes G_1)}\ar[l]^{\tilde{\phi_{c}}}  \ar[r]_{\tilde{\phi_{s}}} \ar[d]_{\tilde{\lambda}\times \tilde{\lambda}} &^M[G_1\otimes G_1, G_1\otimes G_1]  \ar[d]_{\tilde{\mu}} \\
    	^M[G_2\otimes G_2, G_2\otimes G_2] & \frac{G_2\otimes G_2}{\mathcal{Z}(G_2\otimes G_2)}\times \frac{G_2\otimes G_2}{\mathcal{Z}(G_2\otimes G_2)}\ar[l]_{\tilde{\psi_{c}}} \ar[r]^{\tilde{\psi_{s}}} & ^M[G_2\otimes G_2,G_2\otimes G_2].
    }\]
\end{proof}


\section{Isoclinic extensions}

In the present section, extending the notion of isoclinism between the multiplicative Lie algebras, we introduce the concept of isoclinic extensions and establish several key results to develop the theory of isoclinic extensions in multiplicative Lie algebras. 

\begin{definition}\label{Isoclinism_of_extensions}
		Let $\mathcal{E}_i \equiv \xymatrix{1\ar[r] & H_i\ar[r]^{i} & G_i \ar[r]^{\beta_i} & K_i\ar[r] & 1}$, $i=1,2$, be two central extensions of multiplicative Lie algebras. Then $\mathcal{E}_1$ and $\mathcal{E}_2$ are said to isoclinic and denoted by $\mathcal{E}_1 \sim_{ml} \mathcal{E}_2$, if there exist multiplicative Lie algebra isomorphisms $\lambda: K_1 \to K_2$ and $\mu:{^M[G_1,G_1]} \to  {^M[G_2,G_2]}$ such that the following diagram is commutative:
		\[
		\xymatrix{
			^M[G_1,G_1] \ar[d]^{\mu  } & \ar[l]_{\delta_1^c} K_1 \times K_1 \ar[r]^{\delta_1^s} \ar[d]^{\lambda  \times \lambda } & ^M[G_1,G_1] \ar[d]_{\mu } \\
			^M[G_2,G_2] & \ar[l]_{\delta_2^c}  K_2 \times K_2 \ar[r]^{\delta_2^s} & ^M[G_2,G_2]
		}
		\]
  where the maps $\delta_i^c$ and $\delta_i^s$,  given by $\delta_i^c(k_{i1},k_{i2})=[g_{i1},g_{i2}]$ and $\delta_i^s(k_{i1}',k_{i2}')=(g_{i1}'\star g_{i2}')$ with $\beta_i(g_{ij})=k_{ij}$ and $\beta_i(g_{ij}')=k_{ij}'$, where $ i,j \in \{1,2\} $, are well defined. The pair $(\lambda,\mu )$ is said to be an isoclinism between the central extensions $\mathcal{E}_1$ and $\mathcal{E}_2$.

  In particular, the concept of isoclinism between  $\mathcal{E}_1$ and $\mathcal{E}_2$ with  $H_1=\mathcal{Z}(G_1)$ and $H_2=\mathcal{Z}(G_2)$ coincides with the concept of  isoclinism between  $G_1$ to $G_2$ .

\end{definition}

In the following lemma, we explore certain properties of isoclinic extensions.
\begin{lemma}\label{first_lemma}
		Let $(\lambda,\mu )$ be an isoclinism between the central extensions $\mathcal{E}_1$ and $\mathcal{E}_2$. Then the following holds:
		\begin{enumerate}
			\item $\lambda(\beta_1(x))=\beta_2(\mu(x))$ for all $x\in{ ^M[G_1,G_1]}$. 
   
			\item $\mu({^M[x,g_1]})={^M[\mu(x),g_2]}$ for all $x\in {^M[G_1,G_1]}, g_1\in G_1$ and $g_2\in G_2$ such that $\lambda(\beta_1(g_1))=\beta_2(g_2)$.
   
			\item $\mu(H_1\cap {^M[G_1,G_1]})=H_2\cap{ ^M[G_2,G_2]}$.
		\end{enumerate}
\end{lemma}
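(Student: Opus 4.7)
The three statements are essentially formal consequences of the commutativity of the two squares in Definition \ref{Isoclinism_of_extensions} combined with the fact that $\delta_i^c$ and $\delta_i^s$ are well defined (so they depend only on the $\beta_i$-images of their inputs). The strategy is to first prove (1) on the generators of ${^M[G_1,G_1]}$, then deduce (2) by plugging (1) into the isoclinism diagram, and finally obtain (3) by chasing $H_i=\ker\beta_i$ through (1).

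For part (1), I would first verify the identity on a generator, i.e.\ for $x=[g,g']$ and for $x=g\star g'$ with $g,g'\in G_1$. In the commutator case, by definition $\mu([g,g'])=\delta_2^c(\lambda(\beta_1(g)),\lambda(\beta_1(g')))=[h,h']$ for any lifts $h,h'\in G_2$ with $\beta_2(h)=\lambda(\beta_1(g))$ and $\beta_2(h')=\lambda(\beta_1(g'))$. Applying $\beta_2$ then yields $\beta_2(\mu([g,g']))=[\lambda(\beta_1(g)),\lambda(\beta_1(g'))]=\lambda(\beta_1([g,g']))$ since $\lambda$ and $\beta_i$ are multiplicative Lie algebra homomorphisms. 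The same argument with $\delta_i^s$ handles the $\star$-generators. An arbitrary element of ${^M[G_1,G_1]}$ is a product of such generators (and their inverses), and both $\lambda\c\beta_1$ and $\beta_2\c\mu$ are group homomorphisms on ${^M[G_1,G_1]}$, so the identity extends to all of ${^M[G_1,G_1]}$.

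For part (2), given $x\in{^M[G_1,G_1]}$, $g_1\in G_1$ and $g_2\in G_2$ with $\lambda(\beta_1(g_1))=\beta_2(g_2)$, I interpret ${^M[x,g_1]}$ as the pair of generator-type elements $[x,g_1]$ and $x\star g_1$. By part (1), $\beta_2(\mu(x))=\lambda(\beta_1(x))$, so $g_2$ is a valid lift of $\lambda(\beta_1(g_1))$ and $\mu(x)$ is a valid lift of $\lambda(\beta_1(x))$. The well-definedness of $\delta_2^c$ and $\delta_2^s$ then gives
\[
\mu([x,g_1])=\delta_2^c(\lambda(\beta_1(x)),\lambda(\beta_1(g_1)))=[\mu(x),g_2],
\]
\[
\mu(x\star g_1)=\delta_2^s(\lambda(\beta_1(x)),\lambda(\beta_1(g_1)))=\mu(x)\star g_2,
\]
where the outer equalities use the commutativity of the isoclinism diagram. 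For part (3), the inclusion $\mu(H_1\cap{^M[G_1,G_1]})\subseteq H_2\cap{^M[G_2,G_2]}$ is immediate from (1): if $x\in H_1\cap{^M[G_1,G_1]}$ then $\beta_1(x)=1$, so $\beta_2(\mu(x))=\lambda(1)=1$, placing $\mu(x)$ in $H_2$; and $\mu(x)\in{^M[G_2,G_2]}$ since $\mu$ has codomain ${^M[G_2,G_2]}$. The reverse inclusion follows by repeating this argument with the isoclinism $(\lambda^{-1},\mu^{-1})$ between $\mathcal{E}_2$ and $\mathcal{E}_1$.

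The only real subtlety is the implicit well-definedness of $\delta_i^c$ and $\delta_i^s$, which must be invoked carefully when changing lifts (e.g.\ replacing one preimage of $\lambda(\beta_1(g_1))$ by $g_2$); this is the main technical check, and it is granted by Definition \ref{Isoclinism_of_extensions}. Once that is in place, the rest is a direct diagram chase.
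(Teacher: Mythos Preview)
Your proposal is correct and follows essentially the same argument as the paper: verify (1) on generators via the commuting squares, feed (1) back into the diagram (using that $\mu(x)$ is a valid lift of $\lambda(\beta_1(x))$) to obtain (2), and use (1) with $H_i=\ker\beta_i$ for (3). The only cosmetic differences are that the paper treats a single representative generator $(g_1\star g_2)[g_3,g_4]$ at once rather than the two types separately, and for the reverse inclusion in (3) it argues directly (pulling $y$ back through $\mu$ and using injectivity of $\lambda$) instead of invoking the inverse isoclinism $(\lambda^{-1},\mu^{-1})$; also note that ${^M[x,g_1]}$ denotes the single element $(x\star g_1)[x,g_1]$, so your two separate identities combine to give the desired equality.
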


\begin{proof}
\textit{(1)~}
Let $x$ be an element of ${^M[G_1,G_1]}$. Without loss of generality, we can assume $x=(g_1\star g_2)[g_3,g_4]$ for some $g_1,g_2,g_3,g_4  \in G_1$. Then 
\begin{equation*}
			\lambda(\beta_1(x))  = \lambda(\beta_1((g_1\star g_2)[g_3,g_4]))= (\lambda\beta_1(g_1) \star \lambda\beta_1(g_2))[\lambda\beta_1(g_3)), \lambda(\beta_1(g_4)].
\end{equation*}
Since $\beta_2$ is surjective, there exist $g_1', g_2'\in G_2$ such that $\beta_2(g_1')=\lambda(\beta_1(g_1))$ and $\beta_2(g_2')=\lambda(\beta_1(g_2))$. 
Therefore Definition \ref{Isoclinism_of_extensions} implies that
			\begin{align*}
				(\delta_2^s (\lambda \times \lambda) )(\beta_1(g_1),\beta_1(g_2)) & = \mu(\delta_1^s(\beta_1(g_1),\beta_1(g_2))) \\
				\delta_2^s (\lambda(\beta_1(g_1)) , \lambda(\beta_1(g_2))) & = \mu(\delta_1^s(\beta_1(g_1),\beta_1(g_2))) \\
				\delta_2^s(\beta_2(g_1'),\beta_2(g_2')) & = \mu (g_1\star g_2)	\\
				g_1' \star g_2'  & = \mu(g_1\star g_2)\\
				\beta_2(g_1' \star g_2') & = \beta_2(\mu(g_1\star g_2)).
			\end{align*}
Thus, we have \[\lambda\beta_1(g_1) \star \lambda\beta_1(g_2)= 	\beta_2(g_1' \star g_2') =  \beta_2(\mu(g_1\star g_2)).\]
Similarly, \[[\lambda\beta_1(g_3)), \lambda(\beta_1(g_4)]= 	 \beta_2(\mu([g_3, g_4])) .\]
Hence 
\begin{equation*}
			\lambda(\beta_1(x)) = \beta_2(\mu(g_1\star g_2))\beta_2(\mu([g_3, g_4])) = \beta_2(\mu((g_1\star g_2)[g_3, g_4])) =\beta_2(\mu(x)).
\end{equation*}

\textit{(2)~} Let $x\in {^M[G_1,G_1]}$ and $ g_1\in G_1$. Then
			\begin{align*}
				\mu(^M[x,g_1]) & = \mu((x\star g_1)[x,g_1]) \\ 
    & = \mu(x\star g_1)\mu([x,g_1]) \\
    & = \mu(\delta_1^s(\beta_1(x),\beta_1(g_1)))\mu(\delta_1^c(\beta_1(x),\beta_1(g_1))) \\
    & = \delta_2^s(\lambda(\beta_1(x)),\lambda(\beta_1(g_1)))\delta_2^c(\lambda(\beta_1(x)),\lambda(\beta_1(g_1))) \\
	& = \delta_2^s(\beta_2(\mu(x)),\beta_2(g_2))\delta_2^c(\beta_2(\mu(x)),\beta_2(g_2)) \text{~~for some }g_2\in G_2 \\
	& = (\mu(x) \star g_2)[\mu(x), g_2] \\
    & = {^M[\mu(x),g_2]}.
	\end{align*}
			

\textit{(3)~} Let $x\in H_1\cap~^M[G_1,G_1]$. Then $\beta_2(\mu(x))=\lambda(\beta_1(x))=1$.  This implies $\mu(x)\in \ker(\beta_2)=H_2$. Hence $\mu(x)\in H_2\cap{^M[G_2,G_2]}$. 

Conversely, suppose $y\in H_2\cap{^M[G_2,G_2]}$. Then, there exists $x\in{^M[G_1,G_1]}$ such that $\mu(x)=y$. Therefore $\lambda(\beta_1(x))=\beta_2(\mu(x))=\beta_2(y)=1$ and so $\beta_1(x)=1$. This implies  $x\in H_1\cap {^M[G_1,G_1]}$. Hence $H_2\cap{^M[G_2,G_2]}$ is contained in $\mu(H_1\cap{ ^M[G_1,G_1]})$. This completes the proof.
	\end{proof}
The subsequent proposition demonstrates that the concept of isoclinic extensions expands upon the notion of isoclinism between two multiplicative Lie algebras.
\begin{proposition}\label{first_proposition}
		Let $\mathcal{E}_1 \sim_{ml} \mathcal{E}_2$ via $(\lambda,\mu)$. Then 
		\begin{enumerate}
			\item $G_1$ is isoclinic to   $G_2$ via $(\bar\lambda,\mu)$, where $\bar\lambda: G_1/\mathcal{Z}(G_1)\to G_2/\mathcal{Z}(G_2)$ is an isomorphism induced by $\lambda$.
			\item $H_1=\mathcal{Z}(G_1)$ if and only if $H_2=\mathcal{Z}(G_2)$.
		\end{enumerate}
\end{proposition}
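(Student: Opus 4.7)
For part (1), the plan is first to produce the induced isomorphism $\bar\lambda$, then verify the isoclinism diagram from Definition \ref{isoclinism_def}. Since each $\mathcal{E}_i$ is a central extension, $H_i \subseteq \mathcal{Z}(G_i)$, so $\mathcal{Z}(G_i)/H_i$ is a well-defined ideal of $K_i = G_i/H_i$, and by the third isomorphism theorem $K_i/(\mathcal{Z}(G_i)/H_i)\cong G_i/\mathcal{Z}(G_i)$. Hence to descend $\lambda:K_1\to K_2$ to $\bar\lambda: G_1/\mathcal{Z}(G_1)\to G_2/\mathcal{Z}(G_2)$ it suffices to prove the set equality $\lambda(\mathcal{Z}(G_1)/H_1)=\mathcal{Z}(G_2)/H_2$.

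The key step will be this set equality, and I expect it to be the main obstacle since it is the only place where we genuinely need the isoclinism data $(\lambda,\mu)$ rather than just $\lambda$. Take $z_1\in \mathcal{Z}(G_1)$; I want to show that any lift $z_2\in G_2$ of $\lambda(\beta_1(z_1))$ lies in $\mathcal{Z}(G_2)$. Given any $g_2'\in G_2$, surjectivity of $\lambda$ and of $\beta_1$ supplies $g\in G_1$ with $\lambda(\beta_1(g))=\beta_2(g_2')$; then well-definedness of $\delta_2^c,\delta_2^s$ together with the commutativity of the isoclinism-of-extensions diagram yields
\begin{equation*}
[z_2,g_2'] = \delta_2^c(\lambda\beta_1(z_1),\lambda\beta_1(g)) = \mu\bigl(\delta_1^c(\beta_1(z_1),\beta_1(g))\bigr)=\mu([z_1,g])=\mu(1)=1,
\end{equation*}
and the same argument with $\delta^s$ gives $z_2\star g_2'=1$. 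Thus $z_2\in\mathcal{Z}(G_2)$, proving the inclusion $\lambda(\mathcal{Z}(G_1)/H_1)\subseteq\mathcal{Z}(G_2)/H_2$; the reverse inclusion follows by running the same argument with the isoclinism $(\lambda^{-1},\mu^{-1})$.

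Once $\bar\lambda$ is constructed, verifying that the isoclinism diagram for $G_1$ and $G_2$ commutes is essentially a transcription: by the description of $\bar\lambda$, for $g_1,g_2\in G_1$ one has $\bar\lambda(g_i\mathcal{Z}(G_1))=g_i'\mathcal{Z}(G_2)$ for any choice of $g_i'\in G_2$ with $\beta_2(g_i')=\lambda\beta_1(g_i)$. Applying the commutativity of the isoclinism-of-extensions square to $(\beta_1(g_1),\beta_1(g_2))$ gives $\mu([g_1,g_2])=[g_1',g_2']$ and $\mu(g_1\star g_2)=g_1'\star g_2'$, which is exactly $\mu\circ\phi_c^{(1)}=\phi_c^{(2)}\circ(\bar\lambda\times\bar\lambda)$ and the analogous identity for $\phi_s$. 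The isomorphism property of $\mu$ is already given. Hence $(\bar\lambda,\mu)$ is an isoclinism $G_1\sim_{ml} G_2$.

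Part (2) is an immediate corollary of the set equality proved in the key step. Indeed, $H_1=\mathcal{Z}(G_1)$ means $\mathcal{Z}(G_1)/H_1$ is trivial in $K_1$; applying $\lambda$ and using $\lambda(\mathcal{Z}(G_1)/H_1)=\mathcal{Z}(G_2)/H_2$ forces $\mathcal{Z}(G_2)/H_2$ to be trivial, hence $H_2=\mathcal{Z}(G_2)$. The reverse direction is symmetric, using $\lambda^{-1}$.
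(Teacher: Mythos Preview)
Your proof is correct and follows essentially the same approach as the paper: the core computation---using the commuting squares for $\delta^c$ and $\delta^s$ to show that a lift of $\lambda(\beta_1(z_1))$ lies in $\mathcal{Z}(G_2)$ whenever $z_1\in\mathcal{Z}(G_1)$---is identical. The only cosmetic difference is packaging: the paper defines a map $\phi:G_1\to G_2/\mathcal{Z}(G_2)$ directly and computes $\ker\phi=\mathcal{Z}(G_1)$, whereas you invoke the third isomorphism theorem to reduce everything to the single set equality $\lambda(\mathcal{Z}(G_1)/H_1)=\mathcal{Z}(G_2)/H_2$ and then appeal to symmetry via $(\lambda^{-1},\mu^{-1})$ for the reverse inclusion, which also gives part~(2) for free.
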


\begin{proof}
			\textit{(1)~}	Define a map $\phi: G_1 \to G_2/\mathcal{Z}(G_2)$ given by $\phi(g_1)=g_2\mathcal{Z}(G_2)$ such that $\beta_2(g_2)=\lambda(\beta_1(g_1))$.  This map is well-defined. Let $g_2\mathcal{Z}(G_2)\in G_2/\mathcal{Z}(G_2)$. Then there exists $k_1\in K_1$ such that $\lambda(k_1)=\beta_2(g_2)$. Since $\beta_1$ is surjective, there exists $g_1\in G_1$ such that $\lambda(\beta_1(g_1))=\beta_2(g_2)$. This implies $\phi(g_1)=g_2\mathcal{Z}(G_2)$.
            Thus, the map $\phi$ is surjective.
            If we prove  $\ker\phi= \mathcal{Z}(G_1)$, then done. Let $g_1 \in \mathcal{Z}(G_1)$. Then $\phi(g_1)=g_2 \mathcal{Z}(G_2)$ such that $\beta_2(g_2)=\lambda(\beta_1(g_1))$. Take an arbitrary element $g_2'$ in $G_2$. Since $\phi$ is surjective, there exists $g_1'\in G_1$ such that $\phi(g_1')=g_2'\mathcal{Z}(G_2)$. Now
			\begin{align*}
				g_2 \star g_2' & = \delta_2^s (\beta_2(g_2), \beta_2(g_2'))\\
				& = \delta_2^s (\lambda(\beta_1(g_1)),\lambda(\beta_1(g_1')))  \\
				&= \mu(\delta_1^s (\beta_1(g_1),\beta_1(g_1')))\\
				& = \mu(g_1\star g_1') = 1.
			\end{align*}
			Similarly, we can show that $[g_2,g_2']=1$ for all $ g_2'\in G_2$. Thus, we have shown that $g_2\in \mathcal{Z}(G_2) $. Hence $\mathcal{Z}(G_1) \subseteq \ker\phi$.
   
			Next, suppose $g_1\in \ker\phi$, where $\lambda(\beta_1(g_1))=\beta_2(g_2)$ for some $g_2 \in G_2$. This implies $g_2 \in \mathcal{Z}(G_2)$. Take an arbitrary element $g_1'$ in $G_1$.
			\begin{align*}
				\mu(g_1 \star g_1') & = \mu(\delta_1^s (\beta_1(g_1),\beta_1(g_1')))\\
				& = \delta_2^s (\lambda(\beta_1(g_1)),\lambda(\beta_1(g_1')))\\
				& = \delta_2^s (\beta_2(g_2), \beta_2(g_2')) \text{~~for some~~} g_2'\in G_2 \\
                & = g_2 \star g_2' =1.
			\end{align*}
            
			This implies $g_1 \star g_1'=1$ for all $ g_1'\in G_1$. Similarly, we can show that $[g_1,g_1']=1$ for all $ g_1'\in G_1$. Hence $\ker \phi \subseteq \mathcal{Z}(G_1)$. Thus, we have an isomorphism $\bar\lambda : G_1/\mathcal{Z}(G_1) \to G_2/\mathcal{Z}(G_2)$ induced by $\lambda$ such that the following diagram is commutative:
			\[
			\xymatrix{
				^M[G_1,G_1] \ar[d]^{\mu} & \ar[l]_{\bar{\delta_1^c}} \frac{G_1}{\mathcal{Z}(G_1)} \times \frac{G_1}{\mathcal{Z}(G_1)} \ar[r]^{\bar{\delta_1^s}} \ar[d]^{\bar\lambda \times \bar\lambda} & ^M[G_1,G_1] \ar[d]_{\mu} \\
				^M[G_2,G_2] \ar[r]^{\bar{\delta_2^c}} & \frac{G_2}{\mathcal{Z}(G_2)} \times \frac{G_2}{\mathcal{Z}(G_2)} \ar[r]^{\bar{\delta_2^s}} & ^M[G_2,G_2],
			}
			\]
			where $\bar{\delta_i^c}(k_{i1},k_{i2})=[g_{i1},g_{i2}]$ and $\bar{\delta_i^s}(k_{i1}',k_{i2}')=(g_{i1}'\star g_{i2}')$ for $i=1,2$. 

			\textit{(2)~} Let $H_2= \mathcal{Z}(G_2)$ and $h_1 \in \mathcal{Z}(G_1)$. Then there exists $h_2\in \mathcal{Z}(G_2)=H_2=\ker\beta_2$ such that $\lambda(\beta_1(h_1))=\beta_2(h_2)=1$. This implies $h_1\in \ker\beta_1$. Hence $H_1=\mathcal{Z}(G_1)$. Similarly, one can also prove the converse part.
	\end{proof}	


 An isomorphism in the category $EXT$ is called an equivalence. A morphism $(\lambda,\mu, \nu)$ is an equivalence if and only if $\lambda$ and $\nu$ are isomorphism \cite{RLS}. This motivates the following definition in the sense of isoclinism.
\begin{definition}\label{isoclinism_homomorphism}
		A morphism $(\lambda,\mu, \nu)$ between central extensions $\mathcal{E}_1$ and $\mathcal{E}_2$ is called an isoclinic morphism if $(\nu,\mu')$ is an isoclinism between $\mathcal{E}_1$ and $\mathcal{E}_2$, where $\mu'=\mu|_{^M[G_1,G_1]}$.
\end{definition}

The following theorem gives a necessary and sufficient condition for a morphism to be an isoclinic morphism.
\begin{theorem}\label{first_theorem}
		A morphism $(\lambda,\mu,\nu)$ of central extensions $\mathcal{E}_1$ and $\mathcal{E}_2$ is an isoclinic morphism  if and only if $\nu$ is a multiplicative Lie algebra isomorphism and $\ker\mu \cap {^M[G_1,G_1]}=1$.
\end{theorem}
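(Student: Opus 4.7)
The forward direction is essentially a tautology. If $(\lambda,\mu,\nu)$ is an isoclinic morphism, then by Definition~\ref{isoclinism_homomorphism} the pair $(\nu,\mu')$ is an isoclinism between $\mathcal{E}_1$ and $\mathcal{E}_2$, so $\nu$ is automatically a multiplicative Lie algebra isomorphism and $\mu'=\mu|_{^M[G_1,G_1]}$ is an isomorphism. In particular $\mu'$ is injective, which is exactly the assertion $\ker\mu\cap{^M[G_1,G_1]}=1$.

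For the converse, assume $\nu$ is an isomorphism and $\ker\mu\cap{^M[G_1,G_1]}=1$. The plan is to verify the three conditions of Definition~\ref{Isoclinism_of_extensions} for the pair $(\nu,\mu')$: namely that $\mu'$ is a well-defined isomorphism from $^M[G_1,G_1]$ to $^M[G_2,G_2]$, and that the two squares involving $\delta_i^c$ and $\delta_i^s$ commute. Well-definedness on the domain is clear since $\mu$ is a multiplicative Lie algebra homomorphism, so it sends generators $(g\star g')$ and $[g,g']$ of $^M[G_1,G_1]$ to elements of $^M[G_2,G_2]$. Injectivity of $\mu'$ is immediate from the hypothesis $\ker\mu\cap{^M[G_1,G_1]}=1$.

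The real content is surjectivity of $\mu'$ and commutativity of the isoclinism diagram, which I would handle in a single lifting argument. Given any $g_2,g_2'\in G_2$, I use surjectivity of $\nu$ and $\beta_1$ to lift $\beta_2(g_2)$ and $\beta_2(g_2')$ back through $K_2\xrightarrow{\nu^{-1}}K_1\xleftarrow{\beta_1}G_1$ to obtain $\tilde g,\tilde g'\in G_1$ with $\nu(\beta_1(\tilde g))=\beta_2(g_2)$ and $\nu(\beta_1(\tilde g'))=\beta_2(g_2')$. The commutativity $\nu\circ\beta_1=\beta_2\circ\mu$ then gives $\mu(\tilde g)=g_2h_2$ and $\mu(\tilde g')=g_2'h_2'$ for some $h_2,h_2'\in H_2\subseteq\mathcal{Z}(G_2)$. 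Since elements of $\mathcal{Z}(G_2)=LZ(G_2)\cap Z(G_2)$ are killed by $\star$ against anything and commute with everything, I conclude
\[
\mu(\tilde g\star\tilde g')=(g_2h_2)\star(g_2'h_2')=g_2\star g_2',\qquad \mu([\tilde g,\tilde g'])=[g_2h_2,g_2'h_2']=[g_2,g_2'].
\]
This simultaneously shows that every generator of $^M[G_2,G_2]$ lies in the image of $\mu'$ (surjectivity) and that the values $\delta_2^c(\nu\beta_1(\tilde g),\nu\beta_1(\tilde g'))=[g_2,g_2']$ and $\delta_2^s(\nu\beta_1(\tilde g),\nu\beta_1(\tilde g'))=g_2\star g_2'$ agree with $\mu'(\delta_1^c(\beta_1\tilde g,\beta_1\tilde g'))$ and $\mu'(\delta_1^s(\beta_1\tilde g,\beta_1\tilde g'))$ respectively, i.e.\ the isoclinism diagram commutes.

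The main obstacle I expect is the bookkeeping around the centrality argument: one must cleanly exploit that $H_i\subseteq\mathcal{Z}(G_i)$ to absorb the ambiguity in the lifts, and one should also note in passing that $\delta_2^c,\delta_2^s$ are well defined precisely because $H_2\subseteq\mathcal{Z}(G_2)$, so the computation above does not depend on which lifts are chosen. Once this is in place, all three required properties of $(\nu,\mu')$ follow from a single calculation, and the theorem is proved.
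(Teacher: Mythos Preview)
Your proof is correct and follows essentially the same approach as the paper: both lift elements of $G_2$ back to $G_1$ via $\nu^{-1}$ and $\beta_1$, then use the morphism condition $\nu\beta_1=\beta_2\mu$ together with $H_2\subseteq\mathcal{Z}(G_2)$ to kill the central ambiguity in $\star$-products and commutators. The only cosmetic difference is that you fold surjectivity of $\mu'$ and commutativity of the isoclinism diagram into a single lifting computation, whereas the paper treats surjectivity first and then asserts commutativity separately.
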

\begin{proof}
	
 Suppose  $\nu$ is an isomorphism and $\ker\mu \cap {^M[G_1,G_1]}=1$.
  Since $\mu'$ is a restriction of $\mu$ on ${^M[G_1,G_1]}$, $\ker\mu'=\ker\mu\cap {^M[G_1,G_1]}=1$. This implies $\mu'$ is one-one. For surjectivity, without any loss, we may take $y=(g_1'\star g_2')[g_3',g_4']\in{^M[G_2,G_2]}$ for some $g_1',g_2',g_3',g_4'\in G_2$. Then $\beta_2(g_i')=\nu(\beta_1(g_i))$ for some $g_i \in G_1$, where $i=1,2,3,4$. Since $(\lambda,\mu,\nu)$ is a morphism, we have $\beta_2(g_i')=\beta_2(\mu(g_i))$. This implies $g_i'\mu(g_i)^{-1} \in \ker \beta_2 \subseteq \mathcal{Z}(G_2)$.  Therefore, we have $(g_1\star g_2)[g_3,g_4] \in {^M[G_1,G_1]}$ such that
			\begin{align*}
				\mu'((g_1\star g_2)[g_3,g_4])  & = (\mu(g_1)\star \mu(g_2))[\mu(g_3),\mu(g_4)]\\
				& = (g_1'\star g_2')[g_3',g_4']=y.
			\end{align*}
			Hence, $\mu'$ is a multiplicative Lie algebra isomorphism. It is easy to see that the following diagram is commutative:
			\[
			\xymatrix{
				^M[G_1,G_1] \ar[d]^{\mu'} & \ar[l]_{\delta_1^c} K_1 \times K_1 \ar[r]^{\delta_1^s} \ar[d]^{\nu \times \nu} & ^M[G_1,G_1] \ar[d]_{\mu'} \\
				^M[G_2,G_2] & \ar[l]_{\delta_2^c}  K_2 \times K_2 \ar[r]^{\delta_2^s} & ^M[G_2,G_2],
			}
			\]
			where $\delta_i^c(k_{i1},k_{i2})=[g_{i1},g_{i2}]$ and $\delta_i^s(k_{i1}',k_{i2}')=(g_{i1}'\star g_{i2}')$ with $\beta_i(g_{ij})=k_{ij}$ and $\beta_i(g_{ij}')=k_{ij}'$, where $ i,j \in \{1,2\} $. The converse is obvious.
	\end{proof}	
 
 \begin{corollary}
		Let $(\lambda,\mu,\nu)$ be an isoclinic morphism between the extensions $\mathcal{E}_1$ and $\mathcal{E}_2$. Then $\ker \mu \cap{^M[G_1,G_1]}=1$ and $(\im \mu )H_2 =G_2$.
  \end{corollary}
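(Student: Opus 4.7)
The first assertion is immediate: since $(\lambda,\mu,\nu)$ is an isoclinic morphism by hypothesis, Theorem \ref{first_theorem} yields $\ker\mu\cap{^M[G_1,G_1]}=1$ (and moreover $\nu$ is an isomorphism), so there is nothing further to prove for this part.

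For the second assertion, the plan is to exploit the commutativity of the morphism $(\lambda,\mu,\nu)$, which gives the identity $\beta_2\circ\mu=\nu\circ\beta_1$. Fix an arbitrary $g_2\in G_2$. Because $\nu:K_1\to K_2$ is an isomorphism, there is a unique $k_1\in K_1$ with $\nu(k_1)=\beta_2(g_2)$; because $\beta_1$ is surjective, there is $g_1\in G_1$ with $\beta_1(g_1)=k_1$. Then
\[
\beta_2(\mu(g_1))=\nu(\beta_1(g_1))=\nu(k_1)=\beta_2(g_2),
\]
so $\mu(g_1)^{-1}g_2\in\ker\beta_2=H_2$. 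Setting $h:=\mu(g_1)^{-1}g_2\in H_2$, we obtain $g_2=\mu(g_1)\,h\in(\im\mu)H_2$. Since $g_2$ was arbitrary, $G_2\subseteq(\im\mu)H_2$, and the reverse inclusion is automatic.

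There is no real obstacle here: the argument is a direct diagram chase using the surjectivity of $\beta_1$ together with the bijectivity of $\nu$ supplied by Theorem \ref{first_theorem}. The only point to keep in mind is that $H_2\subseteq\mathcal{Z}(G_2)$ (it is a central extension), which ensures that $(\im\mu)H_2$ is an honest submultiplicative Lie subalgebra of $G_2$, so that writing $(\im\mu)H_2=G_2$ makes sense as an equality of subsets/subalgebras.
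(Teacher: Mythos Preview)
Your proof is correct and follows essentially the same approach as the paper: invoke Theorem~\ref{first_theorem} for the first assertion, and for the second, use the commutativity $\beta_2\circ\mu=\nu\circ\beta_1$ together with surjectivity of $\beta_1$ and bijectivity of $\nu$ to show every $g_2\in G_2$ differs from some $\mu(g_1)$ by an element of $\ker\beta_2=H_2$. The paper's proof is simply a one-line compression of this same argument.
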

  
		\begin{proof}
			Let $g' \in G_2$. There exists $g\in G_1$ such that $g'\mu(g)^{-1} \in \ker \beta_2 = H_2$. Hence, the result follows.
		\end{proof}
In particular,
 \begin{corollary}
		Let $(\lambda,\mu,\nu)$ be an isoclinic morphism between the extensions $\mathcal{E}_1$ and $\mathcal{E}_2$ such that $H_1=\mathcal{Z}(G_1)$ and $H_2=\mathcal{Z}(G_2)$. Then $\ker \mu \cap{^M[G_1,G_1]}=1$ and $(\im \mu )\mathcal{Z}(G_2) =G_2$.
  \end{corollary}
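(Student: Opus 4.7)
The plan is to derive this corollary as an immediate specialization of the preceding corollary, using Theorem \ref{first_theorem} and the extra hypothesis that $H_1=\mathcal{Z}(G_1)$ and $H_2=\mathcal{Z}(G_2)$ to replace $H_2$ by $\mathcal{Z}(G_2)$ in the conclusion.

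First, since $(\lambda,\mu,\nu)$ is by assumption an isoclinic morphism between $\mathcal{E}_1$ and $\mathcal{E}_2$, Theorem \ref{first_theorem} applies and yields at once that $\nu$ is a multiplicative Lie algebra isomorphism and $\ker\mu \cap {^M[G_1,G_1]}=1$. This takes care of the first assertion with no further work.

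For the second assertion, I would reproduce the very short argument from the previous corollary: given an arbitrary $g'\in G_2$, the surjectivity of $\beta_2$ combined with the fact that $(\lambda,\mu,\nu)$ is a morphism of short exact sequences produces $g\in G_1$ with $\beta_2(g')=\nu(\beta_1(g))=\beta_2(\mu(g))$, so $g'\mu(g)^{-1}\in \ker\beta_2=H_2$. Here the standing hypothesis $H_2=\mathcal{Z}(G_2)$ is inserted, giving $g'\mu(g)^{-1}\in \mathcal{Z}(G_2)$ and hence $g'\in (\im\mu)\mathcal{Z}(G_2)$. The reverse inclusion $(\im\mu)\mathcal{Z}(G_2)\subseteq G_2$ is trivial, so $(\im\mu)\mathcal{Z}(G_2)=G_2$.

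There is really no obstacle here: the statement is a direct corollary of the preceding one under the identification $H_i=\mathcal{Z}(G_i)$, and the only thing that needs to be noted is that the extra hypothesis on $H_1$ plays no active role in the argument but is present in order to make the conclusion symmetric and to fit the subsequent application to stem covers, where both central kernels will equal the Lie centers of the corresponding covers.
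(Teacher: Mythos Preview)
Your proposal is correct and matches the paper's approach exactly: the paper does not even give a separate proof but simply introduces this corollary with the phrase ``In particular,'' signaling that it is obtained from the preceding corollary by substituting $H_2=\mathcal{Z}(G_2)$. Your write-up spells out precisely that specialization, invoking Theorem \ref{first_theorem} for $\ker\mu\cap{^M[G_1,G_1]}=1$ and repeating the one-line argument for $(\im\mu)\mathcal{Z}(G_2)=G_2$.
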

         Conversely,
		\begin{proposition}\label{second_proposition}
		Let $\mu$ be a multiplicative Lie algebra homomorphism from $G_1$ to $G_2$ such that $\ker \mu \cap{^M[G_1,G_1]}=1$,  $\im \mu H_2=G_2$, $H_1=\mathcal{Z}(G_1)$, and $H_2=\mathcal{Z}(G_2)$. Then a morphism $(\lambda,\mu,\nu)$ is an isoclinic morphism between $\mathcal{E}_1$ and $\mathcal{E}_2$.
	\end{proposition}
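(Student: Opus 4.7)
The plan is to invoke Theorem \ref{first_theorem}: once we have produced the maps $\lambda$ and $\nu$ making $(\lambda,\mu,\nu)$ into a morphism, the hypothesis $\ker\mu\cap{^M[G_1,G_1]}=1$ is already in hand, so the only thing left is to show $\nu$ is a multiplicative Lie algebra isomorphism. The two hypotheses $H_i=\mathcal{Z}(G_i)$ and $(\im\mu)H_2=G_2$ will be used precisely to build $\nu$ and to prove both surjectivity and injectivity.

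First I would check that $\mu(H_1)\subseteq H_2$, which is exactly what is needed to define $\lambda:=\mu|_{H_1}$ and then $\nu:K_1\to K_2$ by $\nu(\beta_1(g))=\beta_2(\mu(g))$ (well-defined because $\mu(H_1)\subseteq H_2=\ker\beta_2$, and automatically making the diagram commute). For $z\in\mathcal{Z}(G_1)$ and an arbitrary $g'\in G_2$, write $g'=\mu(g)h$ using $(\im\mu)H_2=G_2$ with $g\in G_1$, $h\in H_2=\mathcal{Z}(G_2)$. Using identity (2) of the multiplicative Lie algebra definition,
\[
\mu(z)\star g'=(\mu(z)\star\mu(g))\,{^{\mu(g)}}(\mu(z)\star h),
\]
and each factor is trivial: $\mu(z)\star\mu(g)=\mu(z\star g)=1$ since $z\in LZ(G_1)$, and $\mu(z)\star h=1$ since $h\in LZ(G_2)$. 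A parallel computation with $[\mu(z),g']$ expanded via $[x,yw]=[x,y]\cdot y[x,w]y^{-1}$ gives $[\mu(z),g']=1$. Hence $\mu(z)\in\mathcal{Z}(G_2)=H_2$.

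Next I would verify that $\nu$ is an isomorphism. Surjectivity: any $k'\in K_2$ equals $\beta_2(g')$ for some $g'\in G_2=(\im\mu)H_2$, so $g'=\mu(g)h$ with $h\in H_2=\ker\beta_2$, giving $k'=\beta_2(\mu(g))=\nu(\beta_1(g))$. Injectivity is where the hypothesis $\ker\mu\cap{^M[G_1,G_1]}=1$ is crucial: if $\nu(\beta_1(g))=1$ then $\mu(g)\in H_2=\mathcal{Z}(G_2)$, so for every $g_1\in G_1$ we have
\[
\mu(g\star g_1)=\mu(g)\star\mu(g_1)=1\quad\text{and}\quad\mu([g,g_1])=[\mu(g),\mu(g_1)]=1.
\]
Both $g\star g_1$ and $[g,g_1]$ lie in ${^M[G_1,G_1]}\cap\ker\mu=1$, so they are trivial, showing $g\in\mathcal{Z}(G_1)=H_1=\ker\beta_1$ and therefore $\beta_1(g)=1$.

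With $\nu$ an isomorphism and $\ker\mu\cap{^M[G_1,G_1]}=1$ in force, Theorem \ref{first_theorem} applies and concludes that $(\lambda,\mu,\nu)$ is an isoclinic morphism. I expect the main obstacle to be the injectivity step: it is tempting to try to recover $g$ directly from $\mu(g)\in\mathcal{Z}(G_2)$, but the right move is to push the triviality through the centrality of $\mu(g)$ into expressions lying in ${^M[G_1,G_1]}\cap\ker\mu$, where the assumption bites. Everything else is essentially bookkeeping with the defining identities of multiplicative Lie algebras.
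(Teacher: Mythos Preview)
Your proposal is correct and follows essentially the same route as the paper: reduce to Theorem \ref{first_theorem} by proving $\nu$ is bijective, with surjectivity coming from $(\im\mu)H_2=G_2$ and injectivity from pushing $g\star g_1$ and $[g,g_1]$ into $\ker\mu\cap{^M[G_1,G_1]}=1$. You add one step the paper skips, namely the verification that $\mu(H_1)\subseteq H_2$ (needed for $\lambda$ and $\nu$ to exist at all), which is a genuine improvement in rigor since the paper simply assumes $\nu$ is already defined.
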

 
	\begin{proof}
		To prove our proposition, it is enough to show that $\nu$ is a bijection.  The rest follows from Theorem \ref{first_theorem}. Let $k'\in K_2$. Then, there exists  $g'\in G_2$  such that $\beta_2(g') = k'$. But $\im \mu H_2=G_2$ implies that  $k' = \beta_2(\mu(g)h') = \beta_2(\mu(g)) = \nu(\beta_1(g))$ for some $g\in G_1$ and $h'\in H_2$. Thus, $\nu$ is surjective. Now, suppose $k_1\in \ker\nu$. Then, we have  $\beta_2(\mu(g_1))=1$ for some $g_1\in G_1$. This implies that $\mu(g \star g_1)=\mu(g)\star \mu(g_1)=1$ and $\mu([g,g_1])=[\mu(g),\mu(g_1)]=1$ for any $g\in G_1$. Hence $(g\star g_1)$, $[g,g_1]\in \ker\mu \cap{^M[G_1,G_1]}=1$. Therefore $g_1\in \mathcal{Z}(G_1)$ and so $\nu$ is injective. 
	\end{proof}
        The following corollary is an immediate consequence of  Proposition \ref{second_proposition}.
        \begin{corollary}
            Let $\mu$ be a mutiplicative Lie algebra homomorphism from $G_1$ to $G_2$ such that $\ker \mu \cap{^M[G_1,G_1]}=1$ and $(\im \mu) \mathcal{Z}(G_2)=G_2$. Then the pair $(\nu,\mu|_{^M[G_1,G_1]})$ is an isoclinism between $G_1$ and $G_2$.
        \end{corollary}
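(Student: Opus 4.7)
The plan is to reduce the corollary to Proposition \ref{second_proposition} by specializing to the natural central extensions
\[
\mathcal{E}_i \equiv \xymatrix{1\ar[r] & \mathcal{Z}(G_i)\ar[r] & G_i\ar[r]^{\beta_i} & G_i/\mathcal{Z}(G_i)\ar[r] & 1}, \quad i=1,2,
\]
where $\beta_i$ is the canonical projection. Here $H_i=\mathcal{Z}(G_i)$ by design, so the hypotheses of Proposition \ref{second_proposition} about the kernels will be automatic. What is not yet present in the data of the corollary are the maps $\lambda$ and $\nu$; I would obtain both simultaneously by showing that $\mu$ restricts to a homomorphism $\lambda:=\mu|_{\mathcal{Z}(G_1)}\colon \mathcal{Z}(G_1)\to \mathcal{Z}(G_2)$, after which the induced map $\nu\colon G_1/\mathcal{Z}(G_1)\to G_2/\mathcal{Z}(G_2)$, $g\mathcal{Z}(G_1)\mapsto \mu(g)\mathcal{Z}(G_2)$, is well defined and the triple $(\lambda,\mu,\nu)$ is, by construction, a morphism of central extensions in $EXT$.

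To establish $\mu(\mathcal{Z}(G_1))\subseteq \mathcal{Z}(G_2)$ I would take an arbitrary $z\in \mathcal{Z}(G_1)$ and an arbitrary $g'\in G_2$, and use the hypothesis $(\im\mu)\mathcal{Z}(G_2)=G_2$ to factor $g'=\mu(g)h'$ with $g\in G_1$ and $h'\in \mathcal{Z}(G_2)$. Expanding $\mu(z)\star g'$ via the axiom $x\star(yz)=(x\star y){^y(x\star z)}$ and using $\mu(z)\star h'=1$ (because $h'\in LZ(G_2)$) together with $\mu(z)\star \mu(g)=\mu(z\star g)=1$ (because $z\in LZ(G_1)$) yields $\mu(z)\star g'=1$. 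The analogous expansion of $[\mu(z),g']$ via $[a,bc]=[a,b]{^b[a,c]}$, combined with $[\mu(z),h']=1$ and $\mu([z,g])=1$, yields $[\mu(z),g']=1$. Hence $\mu(z)\in LZ(G_2)\cap Z(G_2)=\mathcal{Z}(G_2)$, which is what I need.

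With $\lambda$ and $\nu$ in hand, $(\lambda,\mu,\nu)$ is a morphism from $\mathcal{E}_1$ to $\mathcal{E}_2$ and all the hypotheses of Proposition \ref{second_proposition} are satisfied, so that proposition immediately tells me $(\lambda,\mu,\nu)$ is an isoclinic morphism. By Definition \ref{isoclinism_homomorphism} this means $(\nu,\mu|_{{^M[G_1,G_1]}})$ is an isoclinism between $\mathcal{E}_1$ and $\mathcal{E}_2$. Finally, since $H_i=\mathcal{Z}(G_i)$, the concluding remark of Definition \ref{Isoclinism_of_extensions} identifies an isoclinism of the extensions with an isoclinism of the underlying multiplicative Lie algebras $G_1$ and $G_2$, finishing the proof.

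The only substantive step is verifying $\mu(\mathcal{Z}(G_1))\subseteq \mathcal{Z}(G_2)$; this is where the quasi-surjectivity assumption $(\im\mu)\mathcal{Z}(G_2)=G_2$ is actually used, since it converts an arbitrary element of $G_2$ into one of the form $\mu(g)\cdot(\text{central factor})$ on which the axioms behave well. Everything after this reduction is just a translation through Proposition \ref{second_proposition} and the bottom paragraph of Definition \ref{Isoclinism_of_extensions}.
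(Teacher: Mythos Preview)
Your proposal is correct and follows exactly the route the paper intends: the paper records the corollary as ``an immediate consequence of Proposition~\ref{second_proposition}'' without further argument, and you have simply unpacked that immediacy by taking $H_i=\mathcal{Z}(G_i)$ and invoking the last paragraph of Definition~\ref{Isoclinism_of_extensions}. The one detail you supply that the paper leaves implicit---the verification that $\mu(\mathcal{Z}(G_1))\subseteq\mathcal{Z}(G_2)$ via the factorization $g'=\mu(g)h'$---is exactly what is needed to ensure $(\lambda,\mu,\nu)$ is a morphism in $EXT$, and your computation is correct.
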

  
	\begin{lemma}\label{second_lemma}
		Let $\mathcal{C}_j \equiv \xymatrix{1\ar[r] & H_j\ar[r]^{i_j} & G_j \ar[r]^{\beta_j} & K_j\ar[r] & 1,}$ $j=1,2$, be two central extensions of multiplicative Lie algebras. Then 
		\begin{enumerate}
			\item $\mathcal{C}_1$ is a stem extension if and only if $I \cap { ^M[G_1,G_1]}\neq 1$ for every non-trivial ideal $I$ of $H_1$.
   
			\item If $\mathcal{C}_1$ and $\mathcal{C}_2$ are two isoclinic stem extensions, then $H_1$ is isomorphic to $H_2$. 
		\end{enumerate}
\end{lemma}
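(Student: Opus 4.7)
The plan is to handle the two parts separately. Part (1) is an intrinsic structural statement about the single extension $\mathcal{C}_1$, whereas part (2) is a rather short consequence of Lemma \ref{first_lemma}(3) applied to the given isoclinism data.

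For part (1), the forward direction is immediate: if $\mathcal{C}_1$ is a stem extension, then by definition $H_1\subseteq {^M[G_1,G_1]}$, so any non-trivial ideal $I$ of $H_1$ satisfies $I\cap {^M[G_1,G_1]}=I\neq 1$. For the converse I would argue by contraposition. Assume $H_1\not\subseteq {^M[G_1,G_1]}$. Since $H_1\subseteq \mathcal{Z}(G_1)=LZ(G_1)\cap Z(G_1)$, the subalgebra $H_1$ has trivial Lie operation and lies in the group center, so every subgroup of $H_1$ is automatically an ideal of $H_1$. The subgroup $J:=H_1\cap {^M[G_1,G_1]}$ is a proper ideal, and from the non-trivial abelian quotient $H_1/J$ one extracts a non-trivial subgroup $I\le H_1$ meeting $J$ trivially; this $I$ is then the required ideal with $I\cap {^M[G_1,G_1]}=1$, contradicting the hypothesis.

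For part (2), the argument is short. Since $\mathcal{C}_1\sim_{ml}\mathcal{C}_2$ via $(\lambda,\mu)$, Lemma \ref{first_lemma}(3) gives $\mu(H_1\cap {^M[G_1,G_1]})=H_2\cap {^M[G_2,G_2]}$. Because both extensions are stem, $H_j\subseteq {^M[G_j,G_j]}$ and therefore $H_j\cap {^M[G_j,G_j]}=H_j$ for $j=1,2$. Consequently $\mu$ restricts to a bijection from $H_1$ onto $H_2$; since $\mu$ is already a multiplicative Lie algebra isomorphism on ${^M[G_1,G_1]}$, this restriction is the sought isomorphism $H_1\cong H_2$.

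The main obstacle lies in the contrapositive of part (1), namely in producing a non-trivial ideal of $H_1$ whose intersection with ${^M[G_1,G_1]}$ is trivial whenever $H_1\not\subseteq {^M[G_1,G_1]}$. A naive cyclic choice generated by an element of $H_1\setminus J$ can fail, since for instance if $H_1\cong \Z$ and $J=n\Z$ every non-trivial subgroup of $H_1$ meets $J$ non-trivially; the construction must instead exploit both the central-ideal structure of $H_1$ (every subgroup is an ideal) and the abelian quotient $H_1/J$. Once this is in place, part (2) follows essentially formally from Lemma \ref{first_lemma}(3) and the stem hypothesis.
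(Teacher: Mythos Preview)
Your arguments for the forward direction of (1) and for (2) are correct and agree with the paper's. The gap is in the converse of (1). You correctly observe that the naive choice $I=\langle h\rangle$ for $h\in H_1\setminus J$ (with $J=H_1\cap{}^M[G_1,G_1]$) need not satisfy $I\cap J=1$, citing $H_1\cong\Z$, $J=n\Z$; but you then stop at ``the construction must instead exploit \dots\ the abelian quotient $H_1/J$'' without actually producing such an $I$. This omission cannot be repaired: in your own example with $H_1\cong\Z$ and $J=n\Z$ ($n\ge2$), \emph{every} non-trivial subgroup $m\Z$ meets $J$ in $\mathrm{lcm}(m,n)\,\Z\neq0$, so no suitable $I$ exists at all. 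A finite realisation is $G=(Q_8\times\Z/4)/\langle(-1,2)\rangle$ with trivial $\star$ and $H_1$ the central image of $\{1\}\times\Z/4$: here $H_1\cong\Z/4$, and $H_1\cap{}^M[G,G]=H_1\cap[G,G]$ is its unique order-$2$ subgroup, so every non-trivial subgroup of $H_1$ meets ${}^M[G,G]$ while $H_1\not\subseteq{}^M[G,G]$. Thus the converse in (1) is false as stated, and no completion of your sketch is possible without extra hypotheses on $H_1$.

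For comparison, the paper's own proof of this converse is precisely the cyclic choice $I=\langle h\rangle$ that you (rightly) reject: it simply asserts $I\cap{}^M[G_1,G_1]=1$ without justification. So you have in fact located a genuine defect in the lemma and in the paper's argument, even though your proposal does not supply a remedy.
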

\begin{proof}
			\textit{(1)~} Let $\mathcal{C}_1$ be a stem extension and $I$ be a non-trivial ideal of $H_1$. On the contrary, suppose $I \cap { ^M[G_1,G_1]}=1$. Then $I=I\cap H_1 \subseteq I \cap {^M[G_1,G_1]}=1$, which is a contradiction.
   
   Conversely, assume  $I \cap { ^M[G_1,G_1]}\neq 1$ for every non-trivial ideal $I$ of $H_1$. Suppose  $\mathcal{C}_1$ is not a stem extension.
   Then, there exists $h\in H_1$ which is not in $^M[G_1,G_1]$. If we take an ideal $I =\textless h \textgreater$ of $H_1$, then  $I$ is a non-trivial ideal of $H_1$ such that $I \cap {^M[G_1,G_1]}=1$, which is not possible. 
   
	\textit{(2)~} Let  $\mathcal{C}_1$ and $\mathcal{C}_2$ be two stem extensions such that $\mathcal{C}_1 \sim_{ml} \mathcal{C}_2$ via $(\lambda,\mu)$. From Lemma \ref{first_lemma}, we have $\mu(H_1\cap {^M[G_1,G_1]})=H_2 \cap {^M[G_2,G_2]}$, which implies $\mu(H_1)=H_2$. Hence, $H_1$ is isomorphic to $H_2$.
	\end{proof}
The following corollary is due to Definition \ref{D2} and Lemma \ref{second_lemma}.
\begin{corollary}
   Let  $\mathcal{C}_1$ and $\mathcal{C}_2$ be two stem covers such that $\mathcal{C}_1 \sim_{ml} \mathcal{C}_2$ via $(\lambda,\mu)$. Then, the Schur multipliers of $K_1$ and $K_2$ are isomorphic, so the corresponding covers are also isomorphic.
\end{corollary}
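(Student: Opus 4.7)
The proof plan is to read both conclusions directly out of Lemma \ref{second_lemma}(2) and Definition \ref{D2}(3). Since $\mathcal{C}_1$ and $\mathcal{C}_2$ are stem covers, they are a fortiori stem extensions, so the hypothesis $\mathcal{C}_1 \sim_{ml} \mathcal{C}_2$ places us in the setting of Lemma \ref{second_lemma}(2). That lemma (whose proof invokes Lemma \ref{first_lemma}(3)) shows that $\mu$ restricts to an isomorphism $\mu|_{H_1}: H_1 \to H_2$; this restriction is well-defined because $H_j \subseteq {^M[G_j,G_j]}$ for a stem extension, so the image of $\mu|_{H_1}$ lies in $H_2 \cap {^M[G_2,G_2]} = H_2$.

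To obtain the first claim about Schur multipliers, I would combine $\mu|_{H_1}$ with the stem cover identifications $H_j \cong \tilde M(K_j)$ of Definition \ref{D2}(3) to form the chain
\[
\tilde M(K_1) \;\cong\; H_1 \;\xrightarrow{\ \mu|_{H_1}\ }\; H_2 \;\cong\; \tilde M(K_2),
\]
which yields $\tilde M(K_1) \cong \tilde M(K_2)$. For the second claim, I would argue that the isoclinism already supplies an isomorphism $\lambda: K_1 \to K_2$ of the quotients together with $\mu|_{H_1}$ of the kernels; since a stem cover extension is essentially determined (up to equivalence in $EXT$) by the pair $(K,\tilde M(K))$, these compatible isomorphisms on the outer terms assemble to an equivalence of the extensions $\mathcal{C}_1$ and $\mathcal{C}_2$, forcing in particular that the middle terms, i.e.\ the covers $G_1$ and $G_2$, are isomorphic as multiplicative Lie algebras.

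I do not expect a genuine obstacle here: the corollary is advertised as a consequence of Definition \ref{D2} and Lemma \ref{second_lemma}, and all the real content sits in the proof of that lemma. The only subtlety worth double-checking is that $\mu|_{H_1}$ really does land inside $H_2$ (not just inside ${^M[G_2,G_2]}$), which is exactly what the stem containment $H_j \subseteq {^M[G_j,G_j]}$ buys us via Lemma \ref{first_lemma}(3).
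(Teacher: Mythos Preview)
Your argument for the first conclusion ($\tilde M(K_1)\cong\tilde M(K_2)$) is exactly what the paper intends: apply Lemma~\ref{second_lemma}(2) to get $H_1\cong H_2$ via $\mu|_{H_1}$, then use the stem cover identification $H_j\cong\tilde M(K_j)$ from Definition~\ref{D2}(3). The observation that $\mu|_{H_1}$ lands in $H_2$ because $H_j\subseteq{^M[G_j,G_j]}$ is correct and is precisely how Lemma~\ref{first_lemma}(3) is used in the proof of Lemma~\ref{second_lemma}(2).

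The second conclusion, however, rests on an assertion you have not justified and which is in fact false: you write that ``a stem cover extension is essentially determined (up to equivalence in $EXT$) by the pair $(K,\tilde M(K))$''. This is exactly the statement that covers are unique up to isomorphism, and the paper itself later notes (citing \cite[Remark~4.15]{AMS}) that covers of a multiplicative Lie algebra need \emph{not} be unique; the final corollary of the paper only claims they are mutually \emph{isoclinic}. More concretely, possessing isomorphisms $\lambda$ on the quotients and $\mu|_{H_1}$ on the kernels of two short exact sequences does not by itself produce a homomorphism $G_1\to G_2$ making the diagram commute---this is the whole point of extension theory, where inequivalent extensions with the same outer terms are classified by nontrivial cohomology. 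So the step ``these compatible isomorphisms on the outer terms assemble to an equivalence of the extensions'' is a genuine gap: no middle map has been constructed, and none is forced by the data you have. If the corollary as stated is to hold, it requires an argument beyond Definition~\ref{D2} and Lemma~\ref{second_lemma}; your proposal does not supply one.
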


\begin{proposition}
		Every central extension of multiplicative Lie algebras is isoclinic to a stem extension.
\end{proposition}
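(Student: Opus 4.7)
The plan is to build an isoclinic stem extension as a quotient of the given central extension. Let $\mathcal{E}\equiv 1\to H\xrightarrow{i}G\xrightarrow{\beta}K\to 1$ be a central extension. I will pick an ideal $A$ of $G$ contained in $H$ and pass to
\[
\overline{\mathcal{E}}\equiv 1\to H/A\to G/A\to K\to 1,
\]
which comes equipped with a natural morphism $(\pi|_H,\pi,\mathrm{id}_K):\mathcal{E}\to \overline{\mathcal{E}}$. Theorem \ref{first_theorem} then tells me exactly what to ask of $A$: the identity on $K$ is automatically an isomorphism, so the morphism is isoclinic as soon as $\ker\pi\cap {^M[G,G]}=A\cap {^M[G,G]}=1$. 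At the same time, I want $\overline{\mathcal{E}}$ to be a stem extension, i.e., $H/A\subseteq {^M[G/A,G/A]}=\bigl({^M[G,G]}\bigr)A/A$, which translates to $H\subseteq A\cdot {^M[G,G]}$.

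Choice of $A$. Because $H\subseteq \mathcal{Z}(G)=LZ(G)\cap Z(G)$, every element of $H$ is central in the group sense and satisfies $h\star g=1$, so any subgroup of $H$ is automatically normal in $G$ and stable under the $\star$-action by $G$, hence an ideal of $G$. By Zorn's lemma I pick $A\leq H$ maximal with respect to the property $A\cap {^M[G,G]}=1$. Lemma \ref{Trivial}(3) then yields $\mathcal{Z}(G/A)=\mathcal{Z}(G)/A$, so $H/A\subseteq \mathcal{Z}(G/A)$ and $\overline{\mathcal{E}}$ is a central extension. Once I know $H\subseteq A\cdot {^M[G,G]}$, the quotient $\overline{\mathcal{E}}$ is a stem extension, and Theorem \ref{first_theorem} immediately gives $\mathcal{E}\sim_{ml}\overline{\mathcal{E}}$.

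The heart of the proof is verifying $H=A\cdot (H\cap {^M[G,G]})$ from the maximality of $A$. For $h\in H\setminus A$, the strictly larger subgroup $\langle A,h\rangle$ must meet ${^M[G,G]}$ nontrivially by maximality, producing some $a h^{n}\in {^M[G,G]}$ with $(a,n)\neq (1,0)$; the case $n=0$ violates $A\cap {^M[G,G]}=1$, while $n\neq 0$ forces $h^{n}\in A\cdot (H\cap {^M[G,G]})$. Recovering $h\in A\cdot (H\cap {^M[G,G]})$ from $h^{n}$ is the main obstacle: unlike the vector-space situation of Lie algebras treated in \cite{Mohammadzadeh}, the abelian group $H$ need not split over $H\cap {^M[G,G]}$ a priori, so one must confront the possible torsion of $H/\bigl(A\cdot (H\cap {^M[G,G]})\bigr)$, presumably by refining $A$ within its coset or sharpening the Zorn-type maximality. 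Once this complementation is established, the rest of the argument is bookkeeping through Theorem \ref{first_theorem} with the canonical morphism $(\pi|_H,\pi,\mathrm{id}_K)$.
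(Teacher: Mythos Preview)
Your overall strategy coincides with the paper's: choose, via Zorn's lemma, a subgroup $J\leq H$ maximal for the property $J\cap{^M[G,G]}=1$, pass to the quotient extension $\mathcal{E}_J:1\to H/J\to G/J\to K\to 1$, and verify that it is both isoclinic to $\mathcal{E}$ and a stem extension. For the isoclinism, the paper simply writes down the isomorphism $\mu:{^M[G/J,G/J]}\to{^M[G,G]}$ directly (the inverse of the map induced by the projection, which is injective precisely because $J\cap{^M[G,G]}=1$); your route through Theorem~\ref{first_theorem} with the canonical morphism $(\pi|_H,\pi,\mathrm{id}_K)$ amounts to the same thing.

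The gap you yourself flag---recovering $h$ from $h^{n}\in A\cdot(H\cap{^M[G,G]})$ in the presence of torsion---is exactly where the paper diverges, and your proposal as written does not close it. Rather than attempting the direct inclusion $H\subseteq J\cdot{^M[G,G]}$, the paper invokes the characterization in Lemma~\ref{second_lemma}(1): a central extension is stem if and only if every nontrivial ideal of its kernel meets ${^M[G,G]}$. With this criterion, one argues by contradiction in the quotient: if some nontrivial $I/J\leq H/J$ had $(I/J)\cap{^M[G/J,G/J]}$ trivial, then any $x\in I\cap{^M[G,G]}$ would satisfy $xJ\in (I/J)\cap{^M[G/J,G/J]}$, forcing $x\in J\cap{^M[G,G]}=1$; hence $I\cap{^M[G,G]}=1$ with $J\subsetneq I$, contradicting maximality of $J$. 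This sidesteps any element-by-element decomposition of $H$ and the torsion obstruction you describe---the work is relocated entirely to Lemma~\ref{second_lemma}(1). So the missing ingredient in your argument is precisely that characterization; once it is available, maximality of $J$ finishes the stem verification without any complementation or splitting argument.
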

\begin{proof}
			Let $\mathcal{E} \equiv \xymatrix{1\ar[r] & H \ar[r]^{i} & G \ar[r]^{\beta} & K \ar[r] & 1}$ be a central extension. Consider $\mathcal{I}=\{I: I \subseteq H ~ \text{and}~ I\cap{ ^M[G,G]}=1\}$, then $\mathcal{I}$ is non empty as $1\in \mathcal{I}$. By Zorn's Lemma $\mathcal{I}$ has a maximal element, say $J$ (with respect to inclusion). Clearly,
			\[{\mathcal{E}_J} \equiv \xymatrix{1\ar[r] & H/J \ar[r]^{i} & G/J \ar[r]^{\bar{\beta}} & K \ar[r] & 1},\] 
			where $\bar{\beta}(gJ)=\beta(g)$, is also a central extension. Then $\mathcal{E} \sim_{ml} {\mathcal{E}_J}$ via $(I_K,\mu)$, where $\mu :{^M\left[\frac{G}{J},\frac{G}{J}\right]}\to {^M[G,G]}$ defined by $\mu((g_1J\star g_2 J)[g_3J, g_4 J])=(g_1\star g_2)[g_3,g_4]$.  Now, using Lemma \ref{second_lemma}, we prove that ${\mathcal{E}_J}$ is a stem extension. Assume that $\frac{I}{J} \cap {^M\left[\frac{G}{J},\frac{G}{J}\right]}=J$ for some non-trivial ideal $I$ of $H$ such that $J \subseteq I \subseteq H$. If $x\in I \cap {^M[G,G]}$, then $xJ \in \frac{I}{J} \cap {^M\left[\frac{G}{J},\frac{G}{J}\right]}=J$. Thus $x\in J$, $x\in J \cap { ^M[G,G]}=1$. Hence $I \cap {^M[G,G]}=1$, this implies $I\in \mathcal{I}$ and $I=J$.
	\end{proof}

Let $$\mathcal{F} \equiv\xymatrix{1\ar[r] & R \ar[r]^{i} & F \ar[r]^{\gamma} & K \ar[r] & 1}$$ be a free presentation of a multiplicative Lie algebra $K$. Recall from \cite{RLS}, the Schur multiplier of $K$ is given by $\tilde M(K)= \frac{^M[F,F]\cap R}{^M[R,F]}$. The Schur multiplier $\tilde{M}$ defines a covariant functor from the category $ML$ of multiplicative Lie algebras to the category of abelian groups \cite[Corollary 6.9]{RLS}. Thus a homomorphism $\eta$ from $G$ to $K$ induces a homomorphism $\tilde{M}(\eta): \tilde{M}(G) \to \tilde{M}(K)$. \\

The following lemma is a consequence of \cite[Proposition 4.20]{AMS} and is useful for further investigation.
\begin{lemma}\label{third_lemma}
	Let $K$ be a multiplicative Lie algebra. Then there exist a homomorphism $\Delta : \tilde{M}\left(\frac{K}{\mathcal{Z}(K)}\right)\to{ ^M[K,K]}$ such that the  sequence 
	$$\xymatrix{1\ar[r] & \ker \Delta \ar[r] & \tilde{M}\big(\frac{K}{\mathcal{Z}(K)}\big) \ar[r]^{\Delta} & {^M[K,K]} \ar[r]^{{p'}~~~} & {^M\big[\frac{K}{\mathcal{Z}(K)},\frac{K}{\mathcal{Z}(K)}\big]} \ar[r] & 1}$$
	is exact, where $p'=p|_{^M[K,K]}$ and $p: K \to \frac{K}{\mathcal{Z}(K)}$.
\end{lemma}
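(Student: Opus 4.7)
The plan is to construct $\Delta$ explicitly from a free presentation and then verify exactness one position at a time. Choose a free presentation $\mathcal{F}\equiv 1 \to R \to F \xrightarrow{\gamma} K \to 1$ of $K$ and set $S := \gamma^{-1}(\mathcal{Z}(K))$. Since $R \subseteq S$, the sequence $1 \to S \to F \to K/\mathcal{Z}(K) \to 1$ is itself a free presentation of the quotient, and hence by the formula recalled just before the lemma one has
\[
\tilde{M}\bigl(K/\mathcal{Z}(K)\bigr) \;=\; \bigl({^M[F,F]} \cap S\bigr)\big/{^M[S,F]}.
\]

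I would then define $\Delta$ on representatives by $\Delta\bigl(x\cdot {^M[S,F]}\bigr) := \gamma(x)$ for $x \in {^M[F,F]} \cap S$. The only nontrivial point is showing $\gamma\bigl({^M[S,F]}\bigr) = 1$: because $\gamma(S) = \mathcal{Z}(K) \subseteq LZ(K)\cap Z(K)$, every generator of ${^M[S,F]}$ is either a star $s\star f$ or a commutator $[s,f]$ with $s\in S$, and therefore projects into $(\mathcal{Z}(K)\star K)[\mathcal{Z}(K),K] = 1$. This is the most delicate step and is essentially where \cite[Proposition 4.20]{AMS} is invoked. The image of $\Delta$ clearly lies in ${^M[K,K]}$ since $\gamma$ carries generators of ${^M[F,F]}$ to generators of ${^M[K,K]}$.

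For exactness at ${^M[K,K]}$, note first that $\ker p' = {^M[K,K]} \cap \mathcal{Z}(K)$. The inclusion $\im \Delta \subseteq \ker p'$ is immediate since $\gamma(S) = \mathcal{Z}(K)$. Conversely, any $y \in {^M[K,K]} \cap \mathcal{Z}(K)$ lifts through $\gamma$ to some $x \in {^M[F,F]}$; centrality of $y$ forces $x \in \gamma^{-1}(\mathcal{Z}(K)) = S$, so $x \in {^M[F,F]}\cap S$ and $\Delta\bigl(x\cdot {^M[S,F]}\bigr) = y$. Exactness at the last term is surjectivity of $p'$, which follows because $p$ is surjective and the generators of ${^M[K/\mathcal{Z}(K), K/\mathcal{Z}(K)]}$ are images of generators of ${^M[K,K]}$; exactness at $\tilde{M}(K/\mathcal{Z}(K))$ is the tautological injectivity of $\ker\Delta \hookrightarrow \tilde{M}(K/\mathcal{Z}(K))$. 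The main obstacle throughout is the first one, namely controlling the multiplicative-Lie centralizing identity ${^M[\mathcal{Z}(K), K]} = 1$ at the level of the free presentation; once this is in place the remainder is a routine diagram chase.
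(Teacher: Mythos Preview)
Your proof is correct and follows essentially the same approach as the paper: your ideal $S=\gamma^{-1}(\mathcal{Z}(K))$ is exactly the paper's $T$ (defined by $\mathcal{Z}(K)\cong T/R$), your well-definedness condition $\gamma({^M[S,F]})=1$ is the paper's ${^M[T,F]}\subseteq R$, and your direct definition $\Delta(x\,{^M[S,F]})=\gamma(x)$ is the composition of the paper's map $\delta$ with the isomorphism ${^M[F,F]}/({^M[F,F]}\cap R)\cong{^M[K,K]}$. Your verification of exactness at ${^M[K,K]}$ via $\im\Delta=\ker p'={^M[K,K]}\cap\mathcal{Z}(K)$ is also the same as the paper's, only spelled out in slightly more detail.
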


\begin{proof}
     Let $\mathcal{F} \equiv\xymatrix{1\ar[r] & R \ar[r]^{i} & F \ar[r]^{\gamma} & K \ar[r] & 1}$ be a free presentation of $K$. Then $\mathcal{Z}(K)\cong \frac{T}{R}$ for some ideal $T$ of $F$. Since ${^M[\mathcal{Z}(K),K]}=1$, we get $^M[T,F] \subseteq R$. Define a map $\delta : {^M[F,F]}\cap T\to \frac{^M[F,F]}{^M[F,F]\cap R}$, given by $\delta(x)=x{(^M[F,F]\cap R)}$. Then $\delta$ is a well-defined homomorphism such that $\ker(\delta)\cong {^M[F,F]}\cap R$. Thus, we have an induced homomorphism $\Delta:\tilde{M}\big(\frac{K}{\mathcal{Z}(K)}\big)\to {^M[K,K]}$ such that $\im \Delta \cong {^M[K,K]}\cap \mathcal{Z}(K)$. Also, $\ker p'={^M[K,K]}\cap \mathcal{Z}(K)$. Thus the given sequence is exact.
\end{proof}

\begin{remark}\label{remark_third}
	Let $\mathcal{E} \equiv \xymatrix{1\ar[r] & H \ar[r]^{i} & G \ar[r]^{\beta} & K \ar[r] & 1}$ be a central extension of $H$ by $K$. Then, there exist a homomorphism $\Delta:\tilde{M}(K) \to {^M[G,G]}$ such that the sequence 
	$$\xymatrix{1\ar[r] & \ker \Delta \ar[r] & \tilde{M}(K) \ar[r]^{\Delta~} & ^M[G,G] \ar[r]^{\beta'~} & ~^M[K,K] \ar[r] & 1}$$  is exact, where $\beta'=\beta|_{^M[G,G]}$.
\end{remark}
The following lemma gives another necessary and sufficient condition for a morphism to be an isoclinic morphism using Theorem \ref{first_theorem}.
\begin{lemma}\label{fourth_lemma}
	Let $(\lambda,\mu,\nu)$ be a morphism between central extensions $\mathcal{E}_1$ and $\mathcal{E}_2$ such that $\nu$ is an isomorphism. Then $(\lambda,\mu,\nu)$ is an isoclinic morphism if and only if $\tilde{M}(\nu)(\ker \Delta_1)=\ker\Delta_2$.
\end{lemma}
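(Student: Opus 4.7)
The plan is to combine Theorem \ref{first_theorem} with a diagram chase in the five-term exact sequences of Remark \ref{remark_third}. Since $\nu$ is already assumed to be an isomorphism, Theorem \ref{first_theorem} reduces the statement to showing that the restriction $\mu':=\mu|_{^M[G_1,G_1]}$ is injective if and only if $\tilde{M}(\nu)(\ker\Delta_1)=\ker\Delta_2$. I would set up the commutative ladder
\[
\xymatrix@C=1em{
1 \ar[r] & \ker\Delta_1 \ar[r] \ar[d] & \tilde{M}(K_1) \ar[r]^{\Delta_1} \ar[d]^{\tilde{M}(\nu)} & {^M[G_1,G_1]} \ar[r]^{\beta_1'} \ar[d]^{\mu'} & {^M[K_1,K_1]} \ar[r] \ar[d]^{\nu'} & 1 \\
1 \ar[r] & \ker\Delta_2 \ar[r] & \tilde{M}(K_2) \ar[r]^{\Delta_2} & {^M[G_2,G_2]} \ar[r]^{\beta_2'} & {^M[K_2,K_2]} \ar[r] & 1
}
\]
where $\nu':=\nu|_{^M[K_1,K_1]}$ and the first vertical map is the restriction of $\tilde{M}(\nu)$. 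Commutativity of the right-hand square follows from $(\lambda,\mu,\nu)$ being a morphism of extensions, while commutativity of the middle square $\Delta_2\circ\tilde{M}(\nu)=\mu'\circ\Delta_1$ is the naturality of the construction of $\Delta$ in Lemma \ref{third_lemma}. Since $\nu$ is an isomorphism, functoriality of the Schur multiplier (\cite[Corollary 6.9]{RLS}) makes $\tilde{M}(\nu)$ and $\nu'$ isomorphisms as well.

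For the forward direction, assume $\mu'$ is injective. The containment $\tilde{M}(\nu)(\ker\Delta_1)\subseteq\ker\Delta_2$ is immediate from the middle square. Conversely, given $y\in\ker\Delta_2$, write $y=\tilde{M}(\nu)(x)$ via the bijectivity of $\tilde{M}(\nu)$; then $\mu'(\Delta_1(x))=\Delta_2(y)=1$ forces $\Delta_1(x)=1$ by injectivity of $\mu'$, so $y\in\tilde{M}(\nu)(\ker\Delta_1)$. For the reverse direction, assume $\tilde{M}(\nu)(\ker\Delta_1)=\ker\Delta_2$ and pick $x\in\ker\mu'$. Applying $\beta_2'$ yields $\nu'(\beta_1'(x))=1$, and injectivity of $\nu'$ gives $\beta_1'(x)=1$; by exactness, there exists $y\in\tilde{M}(K_1)$ with $\Delta_1(y)=x$. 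Then $\Delta_2(\tilde{M}(\nu)(y))=\mu'(\Delta_1(y))=\mu'(x)=1$, so $\tilde{M}(\nu)(y)\in\ker\Delta_2=\tilde{M}(\nu)(\ker\Delta_1)$; injectivity of $\tilde{M}(\nu)$ then forces $y\in\ker\Delta_1$, whence $x=\Delta_1(y)=1$, and $\mu'$ is injective.

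The one step requiring care is the naturality identity $\mu'\circ\Delta_1=\Delta_2\circ\tilde{M}(\nu)$. Unwinding the proof of Lemma \ref{third_lemma}, I would fix a free presentation $1\to R_1\to F_1\xrightarrow{\gamma_1}K_1\to 1$, lift $\gamma_1$ along $\beta_1$ to a map $\phi_1:F_1\to G_1$, and identify $\Delta_1$ as the homomorphism induced by the restriction of $\phi_1$ to $^M[F_1,F_1]\cap R_1$ modulo $^M[R_1,F_1]$; after choosing a compatible presentation for $K_2$ together with a lift $\tilde{\nu}:F_1\to F_2$ of $\nu$, one verifies that $\mu\circ\phi_1$ and $\phi_2\circ\tilde{\nu}$ both lift $\gamma_2\circ\tilde{\nu}=\nu\circ\gamma_1$ along $\beta_2$, hence differ by a map into $\mathcal{Z}(G_2)$, which dies after passage to $^M[G_2,G_2]/1$ at the level of Schur multipliers. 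Once this naturality is in place, the diagram chases above complete the proof.
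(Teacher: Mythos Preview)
Your proof is correct and follows essentially the same route as the paper: set up the commutative ladder between the two exact sequences of Remark \ref{remark_third}, use that $\tilde{M}(\nu)$ and $\nu'$ are isomorphisms, and perform the evident diagram chases, invoking Theorem \ref{first_theorem} to pass between injectivity of $\mu'$ and the isoclinic condition. Your treatment is in fact more careful than the paper's, which simply asserts commutativity of the ladder without justifying the middle square $\mu'\circ\Delta_1=\Delta_2\circ\tilde{M}(\nu)$.
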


\begin{proof}
	Using Remark \ref{remark_third} and morphism $(\lambda,\mu,\nu)$, we have the following commutative diagram
		\[\xymatrix{
	 1 \ar[r] & \ker \Delta_1 \ar[r] \ar[d]^{\tilde{M}(\nu)|_{\ker\Delta_1}} & \tilde{M}(K_1) \ar[r]^{\Delta_1~} \ar[d]^{\tilde{M}(\nu)} & ^M[G_1,G_1] \ar[r]^{\beta_1} \ar[d]^{\mu|_{^M[G_1,G_1]}} & ^M[K_1,K_1] \ar[r] \ar[d]^{\nu|_{^M[K_1,K_1]}}  & 1 \\
		 1 \ar[r] & \ker \Delta_2 \ar[r] & \tilde{M}(K_2) \ar[r]^{\Delta_2~} & ^M[G_2,G_2] \ar[r]^{\beta_2} & ^M[K_2,K_2] \ar[r] & 1.
	}\]
Let $(\lambda,\mu,\nu)$ be an isoclinic morphism. Then $\nu$ and  $\mu|_{^M[G_1,G_1]}$ are isomorphisms. Now, suppose $k_2 \in \ker \Delta_2$. Then, there exists $k_1\in \tilde{M}(K_1)$ such that $\tilde{M}(\nu)(k_1)=k_2$. This implies $1=\Delta_2(k_2)=\Delta_2(\tilde{M}(\nu)(k_1))=\mu|_{^M[G_1,G_1]}(\Delta_1(k_1))$ and so $k_1\in \ker\Delta_1$. Hence $\tilde{M}(\nu)(\ker\Delta_1)=\ker \Delta_2$.

Conversely, suppose $\nu$ is an isomorphism and $\tilde{M}(\nu)(\ker\Delta_1)=\ker \Delta_2$. So,  $\tilde{M}(\nu)$ and $\nu|_{^M[K_1,K_1]}$ are isomorphisms. Thus, $\tilde{M}(\nu)|_{\ker\Delta_1}$ is also an isomorphism. Hence, we  conclude that $\mu|_{^M[G_1,G_1]}$ is an isomorphism. Now, Theorem \ref{first_theorem} implies that  $(\lambda,\mu,\nu)$ is an isoclinic morphism. 
	
\end{proof}

 Let $\mathcal{E}_i \equiv \xymatrix{1\ar[r] & H_i\ar[r]^{i} & G_i \ar[r]^{\beta_i} & K_i\ar[r] & 1,}$ $i=1,2$, be two central extensions and $\nu : K_1 \to K_2$ be a homomorphism. Form the pullback $$G_1\times_{K_2} G_2=\{(g_1,g_2): g_i\in G_i, i=1,2,~ \nu(\beta_1(g_1))=\beta_2(g_2)\}.$$ Then $G_1\times_{K_2} G_2$ is a subalgebra of $G_1\times G_2$ and the extension 
\[\bar{\mathcal{E}} \equiv \xymatrix{1\ar[r] & H_1\times H_2 \ar[r]^{i~~} & G_1\times_{K_2} G_2 \ar[r]^{~\beta} & K_1 \ar[r] & 1}\]
where $\beta(g_1,g_2)=\beta_1(g_1)$ for all $(g_1,g_2)\in G_1\times_{K_2} G_2$, is a central extension.
\begin{remark}
   In further investigation, we use the notation  $\bar{G}$ instead of $G_1\times_{K_2} G_2$.
\end{remark}


\begin{lemma}\label{fifth_lemma}
	Let $\nu: K_1 \to K_2$ be an isomorphism and $\bar{\mathcal{E}}$ be a central extension defined as above. Then $\ker \Delta = \tilde{M}(\nu)^{-1}(\ker \Delta_2)\cap \ker \Delta_1$, where $\Delta: \tilde{M}(K_1)\to {^M[\bar{G},\bar{G}]}$.
\end{lemma}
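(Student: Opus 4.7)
The plan is to exploit the two canonical coordinate projections out of the pullback $\bar G = G_1 \times_{K_2} G_2$. Let $\pi_j : \bar G \to G_j$ denote the $j$-th projection for $j=1,2$. Since $\beta(g_1,g_2) = \beta_1(g_1)$, the projection $\pi_1$ assembles into a morphism of central extensions $(\pi_1|_{H_1\times H_2},\pi_1,\id_{K_1}) : \bar{\mathcal{E}} \to \mathcal{E}_1$, and the pullback relation $\nu(\beta_1(g_1))=\beta_2(g_2)$ for $(g_1,g_2)\in\bar G$ is precisely what makes $(\pi_2|_{H_1\times H_2},\pi_2,\nu) : \bar{\mathcal{E}} \to \mathcal{E}_2$ a morphism of extensions.

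Next, I would invoke naturality of the connecting homomorphism $\Delta$ of Remark \ref{remark_third}. Concretely, given a morphism $(\lambda,\mu,\nu)$ of central extensions, the construction of $\Delta$ via a free presentation (as in the proof of Lemma \ref{third_lemma}) is compatible with a lift of $\mu$ to the presentations, and so the induced square
\[
\xymatrix{
\tilde{M}(K_1)\ar[r]^{\Delta_1}\ar[d]_{\tilde{M}(\nu)} & {^M[G_1,G_1]}\ar[d]^{\mu|_{^M[G_1,G_1]}} \\
\tilde{M}(K_2)\ar[r]^{\Delta_2} & {^M[G_2,G_2]}
}
\]
commutes. Applying this naturality to the two morphisms constructed above gives the identities
\[
\pi_1|_{^M[\bar G,\bar G]}\circ \Delta \;=\; \Delta_1, \qquad \pi_2|_{^M[\bar G,\bar G]}\circ \Delta \;=\; \Delta_2\circ \tilde{M}(\nu).
\]

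Finally, since $\bar G$ is a subalgebra of $G_1\times G_2$, an element $y\in \bar G$ is trivial if and only if both $\pi_1(y)=1$ and $\pi_2(y)=1$. Taking $y = \Delta(x)$ for $x\in \tilde{M}(K_1)$, the two displayed identities yield
\[
x\in \ker\Delta \iff \Delta_1(x)=1 \text{ and } \Delta_2(\tilde{M}(\nu)(x))=1 \iff x\in \ker\Delta_1 \cap \tilde{M}(\nu)^{-1}(\ker\Delta_2),
\]
which is the claimed equality. The main subtle point, and the place where I would be most careful, is verifying the naturality of $\Delta$ with respect to morphisms of central extensions; once that is in place (by choosing compatible lifts of $\pi_1$ and $\pi_2$ to the corresponding free presentations), the rest of the argument is a direct coordinate-wise identification in the pullback.
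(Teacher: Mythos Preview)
Your proposal is correct and follows essentially the same route as the paper: define the two projections $\pi_i:\bar G\to G_i$, observe they yield morphisms of central extensions over $\id_{K_1}$ and $\nu$ respectively, invoke naturality of $\Delta$ (the paper appeals to the commutative diagram of Lemma~\ref{fourth_lemma}) to obtain $\pi_1\circ\Delta=\Delta_1$ and $\pi_2\circ\Delta=\Delta_2\circ\tilde M(\nu)$, and then read off the kernel coordinate-wise. The paper phrases the final step as $\Delta(x)=(\Delta_1(x),\Delta_2(\tilde M(\nu)(x)))$, which is exactly your observation that an element of the pullback vanishes iff both projections vanish.
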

\begin{proof}
	Define natural projections $p_i:\bar{G} \to G_i$ by $p_i(g_1,g_2)=g_i$ for $i=1,2$. Let $\nu_1=I$ and $\nu_2=\nu$. Then $(p_i|_{H_1\times H_2},p_i, \nu_i)$ for $i=1,2$, are morphisms from $\bar{\mathcal{E}}$ to $\mathcal{E}_1$ and $\bar{\mathcal{E}}$ to $\mathcal{E}_2$, respectively. Thus, we have two commutative diagrams similar to the previous lemma. This implies $p_1(\Delta(x)) =\Delta_1(x)$ and $p_2(\Delta(x))=\Delta_2(\tilde{M}(\nu)(x))$ for all $x\in \tilde{M}(K_1)$. Therefore, $\Delta(x) = (\Delta_1(x)),\Delta_2(\tilde{M}(\nu)(x))$. Hence, the result follows.
\end{proof}

\begin{theorem}\label{second_theorem}
	Let $\mathcal{E}_i \equiv \xymatrix{1 \ar[r] & H_i \ar[r]^{i} & G_i \ar[r]^{\beta_i} & K_i \ar[r] & 1,}$ $i=1,2$ be two central extensions and $\nu: K_1 \to K_2$ be an isomorphism. Then the following are equivalent:
	\begin{enumerate}
		\item $\mathcal{E}_1$ is isoclinic to $\mathcal{E}_2$.
		\item There exists an isomorphism $\mu':{^M[G_1,G_1]} \to {^M[G_2,G_2]}$ such that $\mu'\Delta_1=\Delta_2 \tilde{M}(\nu)$.
		\item $\tilde{M}(\nu)(\ker \Delta_1)=\ker \Delta_2$.
	\end{enumerate}
\end{theorem}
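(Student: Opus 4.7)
The plan is to establish the cycle $(1)\Rightarrow(2)\Rightarrow(3)\Rightarrow(1)$, using the functoriality of the Schur multiplier together with the two preceding lemmas describing isoclinic morphisms in terms of $\ker\Delta$ (Lemma \ref{fourth_lemma}) and the pullback identity for $\ker\Delta$ (Lemma \ref{fifth_lemma}).

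For $(1)\Rightarrow(2)$, I would first fix a free presentation $\gamma: F \to K_1$ with kernel $R$. Since $\nu$ is an isomorphism, $\nu\gamma: F \to K_2$ is another free presentation with the same kernel $R$, so both Schur multipliers can be realized as $\frac{{^M[F,F]}\cap R}{{^M[R,F]}}$ and $\tilde M(\nu)$ becomes the identity on this common description. Choosing lifts $\tilde\gamma_i: F \to G_i$ with $\beta_1\tilde\gamma_1=\gamma$ and $\beta_2\tilde\gamma_2=\nu\gamma$, the formula $\Delta_i(w\,{^M[R,F]})=\tilde\gamma_i(w)$ reduces the required identity $\mu'\Delta_1 = \Delta_2\tilde M(\nu)$ to the pointwise claim $\mu'(\tilde\gamma_1(w))=\tilde\gamma_2(w)$ for every $w \in {^M[F,F]}\cap R$. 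The compatibility $\beta_2(\tilde\gamma_2(a)) = \nu(\beta_1(\tilde\gamma_1(a)))$ for all $a\in F$ then allows one to invoke the commutative diagram defining the isoclinism $(\nu,\mu')$ on each individual bracket, giving $\mu'(\tilde\gamma_1(a)\star\tilde\gamma_1(b))=\tilde\gamma_2(a)\star\tilde\gamma_2(b)$ and the analogous identity for commutators; an induction on word length (and the fact that $\mu'$ is a homomorphism) finishes the argument.

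For $(2)\Rightarrow(3)$, $\tilde M(\nu)$ is an isomorphism by functoriality, and combining $\mu'\Delta_1=\Delta_2\tilde M(\nu)$ with $\mu'$ an isomorphism immediately yields $\ker\Delta_1 = \tilde M(\nu)^{-1}(\ker\Delta_2)$, which is exactly (3). For $(3)\Rightarrow(1)$, I would form the pullback extension $\bar{\mathcal E}$ together with its two natural morphisms $(p_i|_{H_1\times H_2}, p_i, \nu_i)$ to $\mathcal E_i$, where $\nu_1 = I_{K_1}$ and $\nu_2 = \nu$. Lemma \ref{fifth_lemma} yields $\ker\Delta = \tilde M(\nu)^{-1}(\ker\Delta_2)\cap\ker\Delta_1$, and hypothesis (3) forces this to equal $\ker\Delta_1$; consequently $\tilde M(\nu_i)(\ker\Delta)=\ker\Delta_i$ for $i=1,2$. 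Lemma \ref{fourth_lemma} then promotes both canonical morphisms to isoclinic morphisms, so $\bar{\mathcal E}\sim_{ml}\mathcal E_1$ and $\bar{\mathcal E}\sim_{ml}\mathcal E_2$. Transitivity and symmetry of isoclinism of extensions (immediate from Definition \ref{Isoclinism_of_extensions} by composing and inverting the defining commutative diagrams) then produce an isoclinism $\mathcal E_1 \sim_{ml}\mathcal E_2$ whose $K$-level component is $\nu_2\circ\nu_1^{-1}=\nu$.

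The main obstacle I expect is $(1)\Rightarrow(2)$: since $\Delta_i$ is defined via a free presentation together with a chosen lift, comparing $\mu'\Delta_1$ with $\Delta_2\tilde M(\nu)$ requires carefully aligning the two presentations so that $\tilde M(\nu)$ simplifies to the identity, and then verifying that the isoclinism relations, which only give information about the distinguished star and commutator brackets, are exactly strong enough to transport $\tilde\gamma_1(w)$ to $\tilde\gamma_2(w)$ for arbitrary words $w\in {^M[F,F]}\cap R$. The implications $(2)\Rightarrow(3)$ and $(3)\Rightarrow(1)$ are essentially formal consequences of Lemmas \ref{fourth_lemma} and \ref{fifth_lemma} once this alignment is in hand.
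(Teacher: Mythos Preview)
Your proposal is correct, and the implications $(2)\Rightarrow(3)$ and $(3)\Rightarrow(1)$ match the paper's argument essentially word for word.

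For $(1)\Rightarrow(2)$ you take a genuinely different route. The paper does \emph{not} unpack $\Delta_i$ via free presentations; instead it recycles the pullback construction once more. Given the isoclinism $(\nu,\mu')$, one checks that ${^M[\bar G,\bar G]}=\{(g,\mu'(g)):g\in{^M[G_1,G_1]}\}$, so each projection $p_i:\bar G\to G_i$ satisfies $\ker p_i\cap{^M[\bar G,\bar G]}=1$; by Theorem~\ref{first_theorem} the morphisms $(p_i|_{H_1\times H_2},p_i,\nu_i)$ are therefore isoclinic morphisms. The naturality square attached to each of them (the one appearing in the proof of Lemma~\ref{fourth_lemma}) yields $p_i|_{{}^M[\bar G,\bar G]}\circ\Delta=\Delta_i\circ\tilde M(\nu_i)$, and setting $\mu'=(p_2|_{{}^M[\bar G,\bar G]})\circ(p_1|_{{}^M[\bar G,\bar G]})^{-1}$ gives $\mu'\Delta_1=\Delta_2\tilde M(\nu)$. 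Your approach is more explicit about what $\Delta_i$ actually computes (it requires, and you should state, that the lifts $\tilde\gamma_i:F\to G_i$ are chosen as multiplicative Lie algebra homomorphisms, which is possible by freeness of $F$); the paper's approach is shorter and keeps the argument entirely at the level of the pullback machinery already in hand, so that the whole proof uses a single auxiliary object $\bar{\mathcal E}$ throughout.
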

\begin{proof}
	\textit{(1)} $\implies$ \textit{(2)} Let $\mathcal{E}_1 \sim_{ml} \mathcal{E}_2$ via $(\nu, \mu' )$. Clearly $^M[\bar{G},\bar{G}]=\{(g_1,\mu'(g_1)):g_1\in {^M[G_1,G_1]}\}$. Let $\mu_i: \bar{G} \to G_i$, $i=1,2$ be defined as $\mu_i(g_1,g_2)=g_i$ and $\nu_1=I$ and $\nu_2=\nu$. Then $\ker \mu_i\cap {^M[\bar{G},\bar{G}]}=1$. Thus $(\mu_i|_{H_1\times H_2},\mu_i,\nu_i)$, $i=1,2$ are isoclinic morphisms from $\bar{\mathcal{E}}$ to $\mathcal{E}_1$ and $\bar{\mathcal{E}}$ to $\mathcal{E}_2$, respectively. Therefore $\mu_i|_{^M[\bar{G},\bar{G}]}\Delta= \Delta_i\tilde{M}(\nu_i)$, $i=1,2$. Now take $\mu'=(\mu_2|_{^M[\bar{G},\bar{G}]})(\mu_1|_{^M[\bar{G},\bar{G}]})^{-1}$. Then $\mu' \Delta_1=\Delta_2\tilde{M}(\nu)$.

 	\textit{(2)} $\implies$ \textit{(3)}  
		Follows from the Lemma \ref{fourth_lemma}.
		
	\textit{(3)} $\implies$ \textit{(1)}  Let $\tilde{M}(\nu)\ker \Delta_1=\ker \Delta_2$. By Lemma \ref{fifth_lemma}, $\ker \Delta=\ker \Delta_1 $. Thus $\tilde{M}(\nu_1)\ker \Delta = \ker \Delta_1$ and $\tilde{M}(\nu_2) \ker \Delta=\ker \Delta_2$, where $\nu_1=I$ and $\nu_2=\nu$. Hence by Lemma \ref{fourth_lemma}, there exist isoclinic morphisms from $\bar{\mathcal{E}}$ to $\mathcal{E}_1$ and $\bar{\mathcal{E}}$ to $\mathcal{E}_2$. Let $(\nu_i,\mu_i')$ be isoclinisms from $\bar{\mathcal{E}}$ to $\mathcal{E}_1$ and $\bar{\mathcal{E}}$ to  $\mathcal{E}_2$. Then $\mathcal{E}_1$ is isoclinic to $\mathcal{E}_2$ via $(\nu,\mu')$, where $\mu'= \mu_2'\circ \mu_1'^{-1}$.\end{proof}

It is well known that the cover of a multiplicative Lie algebra need not be unique \cite[Remark 4.15]{AMS}. In the case of a perfect multiplicative Lie algebra, the covers are uniquely determined up to isomorphism \cite[Corollary 4.11]{AMS}. The following corollary shows that the covers of a general multiplicative Lie algebra can be determined up to isoclinism.

\begin{corollary}
    All stem covers of a multiplicative Lie algebra $K$ are mutually isoclinic.
\end{corollary}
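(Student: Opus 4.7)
The plan is to deduce the corollary directly from Theorem \ref{second_theorem} by taking the two stem covers over the common base $K$ and using $\nu = \mathrm{Id}_K$ as the isomorphism on the quotient. Let $\mathcal{C}_i \equiv \xymatrix{1\ar[r] & H_i\ar[r] & G_i \ar[r]^{\beta_i} & K\ar[r] & 1}$, $i=1,2$, be two stem covers of $K$. By Definition \ref{D2}(3), we have $H_i \cong \tilde{M}(K)$ and $H_i \subseteq {}^M[G_i,G_i]$, and since the extension is central, $H_i \subseteq \mathcal{Z}(G_i) \cap {}^M[G_i,G_i]$.

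By Remark \ref{remark_third}, for each $i$ there is an exact sequence
\[ 1 \longrightarrow \ker \Delta_i \longrightarrow \tilde{M}(K) \xrightarrow{\;\Delta_i\;} {}^M[G_i,G_i] \xrightarrow{\;\beta_i'\;} {}^M[K,K] \longrightarrow 1, \]
where $\beta_i' = \beta_i|_{{}^M[G_i,G_i]}$. Because $H_i \subseteq {}^M[G_i,G_i]$, we get $\im \Delta_i = \ker \beta_i' = H_i \cap {}^M[G_i,G_i] = H_i$. Hence $\Delta_i$ induces a surjection $\tilde{M}(K) \twoheadrightarrow H_i \cong \tilde{M}(K)$. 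The crucial step is to identify this surjection as the isomorphism witnessing the stem cover property: tracing through the construction of $\Delta_i$ in Lemma \ref{third_lemma} (via a free presentation $1 \to R \to F \to K \to 1$ and a lifting $F \to G_i$), one sees that $\Delta_i$ is precisely the map $\frac{{}^M[F,F] \cap R}{{}^M[R,F]} \to H_i$ realizing the stem cover isomorphism. Consequently $\ker \Delta_i = 1$ for $i=1,2$.

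With $\nu = \mathrm{Id}_K$, functoriality of $\tilde{M}$ gives $\tilde{M}(\nu) = \mathrm{Id}_{\tilde{M}(K)}$, so trivially $\tilde{M}(\nu)(\ker \Delta_1) = 1 = \ker \Delta_2$. By the equivalence (1) $\Leftrightarrow$ (3) of Theorem \ref{second_theorem}, we conclude $\mathcal{C}_1 \sim_{ml} \mathcal{C}_2$. Finally, applying Proposition \ref{first_proposition}(1) (noting that for stem covers $H_i = \mathcal{Z}(G_i)$ is not required, but the isoclinism of the extensions induces an isoclinism on the quotients $G_i / \mathcal{Z}(G_i)$ and on ${}^M[G_i, G_i]$), we obtain that the covers $G_1$ and $G_2$ are mutually isoclinic as multiplicative Lie algebras.

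The main obstacle is the identification $\ker \Delta_i = 1$. Merely knowing $\tilde{M}(K) \twoheadrightarrow H_i$ with $H_i \cong \tilde{M}(K)$ as abstract abelian groups does not force injectivity (for infinitely generated groups, a group can be isomorphic to a proper quotient of itself). The resolution must use the specific description of $\Delta_i$ from the free presentation in Lemma \ref{third_lemma} together with the universal construction of the cover, so that the surjection $\Delta_i$ coincides with the structural isomorphism $\tilde{M}(K) \to H_i$ furnished by the definition of a stem cover.
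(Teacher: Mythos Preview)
Your proof is correct and follows the same route as the paper: take $\nu=\mathrm{Id}_K$ and invoke Theorem~\ref{second_theorem}. You supply the verification of condition~(3) (namely $\ker\Delta_i=1$ for stem covers, via the free-presentation description of $\Delta_i$) that the paper's one-line proof omits entirely; your caveat that the abstract isomorphism $H_i\cong\tilde M(K)$ alone would not suffice is well taken.
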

\begin{proof}
 Let $\mathcal{C}_i \equiv \xymatrix{1 \ar[r] & H_i \ar[r]^{i} & G_i \ar[r]^{\beta_i} & K \ar[r] & 1}$ $i=1,2$ be two stem covers of a  multiplicative Lie algebra $K$ and $\nu=I$ be an isomorphism on $K$. Now, the result follows from Theorem \ref{second_theorem}.  

 In particular, covers of a multiplicative Lie algebra $K$ are mutually isoclinic.
\end{proof}


\noindent{\bf Acknowledgement:}
The first named author sincerely thanks MNNIT Allahabad and the Ministry of Education, Govt. of India, for providing the institute fellowship. 
The second named author sincerely thanks IIIT Allahabad and the University Grant Commission (UGC), Govt. of India, New Delhi, for the research fellowship. The third named author is thankful to National Board for Higher Mathematics (NBHM), Government of India  for the financial support for the project ``Linear Representation of multiplicative Lie algebra".

\end{document}